\theoremstyle{plain} \newtheorem{thm}{Theorem}[section]
\newtheorem{cor}[thm]{Corollary} \newtheorem{prop}[thm]{Proposition}
\newtheorem{lemma}[thm]{Lemma} 
\newtheorem{question}[thm]{Question} 
\newtheorem*{namedtheorem}{\theoremname}
\newcommand{\theoremname}{testing}
\newenvironment{named}[1]{\renewcommand{\theoremname}{#1}\begin{namedtheorem}}{\end{namedtheorem}}
\theoremstyle{definition}
\newtheorem{ex}[thm]{Example} 
\theoremstyle{remark}
\newcommand{\Sth}{\mathbb{S}^3}
\begin{document}
	
\author{Christian Millichap}
\address{Department of Mathematics\\ 
	Furman University\\ 
	Greenville, SC 29613}
\email{christian.millichap@furman.edu}

\author{Rolland Trapp}
\address{Department of Mathematics\\ 
	California State University\\ 
	San Bernardino, CA 92407}
\email{rtrapp@csusb.edu}

\title{Symmetry groups of flat fully augmented links and their complements.}

\begin{abstract}
In this paper, we prove that the (orientation-preserving) symmetry groups of $b$-prime flat fully augmented links correspond exactly with the finite subgroups of $O(3)$. We accomplish this by first developing a dictionary between automorphisms of a  $3$-connected planar cubic graph associated to a flat fully augmented link $L$ and orientation-preserving symmetries of $L$. Our work also provides a simple method to explicitly construct infinite classes of distinct $b$-prime flat fully augmented links $\{L_i\}$ with $Sym^{+}(\Sth \setminus L_i) \cong Sym^{+}(\Sth, L_i) \cong G$, for any $G$ that is a finite subgroup of $O(3)$.
\end{abstract}

\maketitle

\section{Introduction}
\label{sec:intro}

Flat fully augmented links (flat FALs), and more generally, FALs are a well studied class of hyperbolic links that admit tractable geometric and topological structures. To construct a flat FAL $L$, start with a link $L' \subset \Sth$ with a fixed link diagram $D(L')$. Augment each twist region of $D(L')$ with a unknotted circle and undo all twists within each twist region to obtain a \textbf{flat FAL diagram} $D(L)$; see Figure \ref{fig:FlatFAL} for a diagram of a flat FAL.  The diagram $D(L)$ provides a partition of the components of $L$ into two classes: \textbf{crossing circles} which are the unknotted circles added via augmentation, and \textbf{knot circles} which are the components of $L$ corresponding to the original link $L'$. The work of Purcell \cite[Theorem 2.5]{Pu4} shows that an FAL $L$ is hyperbolic if and only if there exists a corresponding link diagram $D(L')$ that is non-splittable, prime, twist-reduced, and contains at least two twist regions where $D(L)$ is obtained from $D(L')$ by the augmentation process described above; see \cite{Pu4} for details on these diagrammatic definitions and background on FALs. Moving forward, we will assume that any FAL is hyperbolic. 

\begin{figure}[ht]
	\centering
	\begin{overpic}[scale=1.0]{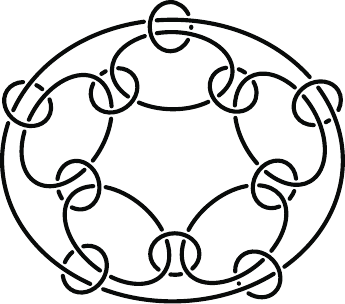}
		\put(33, 45){$k_1$}
		\put(62, 45){$k_3$}
		\put(39, 32){$k_5$}
		\put(57, 32){$k_4$}
		\put(48, 50){$k_2$}
		\put(20, 80){$k_6$}
			\end{overpic}
	\caption{A flat FAL with ten crossing circles and six knot circles. The knot circles are labeled by $k_i$, for $i =1, \ldots, 6$.}
	\label{fig:FlatFAL}
\end{figure}

Frequently, basic combinatorial structures associated with an FAL $L$  encode coarse, and sometimes even precise, geometric and topological data about $(\Sth, L)$ and $(\Sth \setminus L)$. For instance, the volume of any FAL is bounded in terms of linear functions in the number of crossing circles of $L$, and for certain infinite classes of FALs, the volume is precisely determined in this manner; see \cite[Proposition 3.6]{Pu4} and the appendix to \cite{La}. Thus, it is natural to consider what other geometric and topological features of a flat FAL $L$ are encoded in  combinatorial data associated to $L$.

Here, we are interested in investigating symmetries of flat FALs and their complements. Given a link $L$, we let $Sym(\Sth, L)$ denote the group of self-homeomorphisms of the pair $(\Sth, L)$, up to isotopy, and we let $Sym^{+}(\Sth, L)$ denote the index two subgroup of $Sym(\Sth, L)$ consisting of orientation-preserving symmetries. We define $Sym(\Sth \setminus L)$ and $Sym^{+}(\Sth \setminus L)$ in a similar fashion. Mostow--Prasad Rigidity implies that any symmetry of a hyperbolic $\Sth \setminus L$ corresponds with a unique isometry of its hyperbolic structure. This motivated the following question posed by Paoluzzi--Porti \cite{PaPo2009}: What is the relationship between symmetries of a hyperbolic link and isometries of its complement?   For any hyperbolic link $L \subset \Sth$, we always have $Sym^{+}(\Sth, L) \subseteq Sym^{+}(\Sth \setminus L)$, where both groups are finite. However, this set inclusion can be a strict containment \cite{HeWe1992} and can even result in an arbitrarily large index as subgroup containment \cite{MiTr2025}. There also exist infinite classes of hyperbolic links where $Sym^{+}(\Sth \setminus L) \cong Sym^{+}(\Sth, L)$; for instance, knots \cite{GL} and $2$-bridge links \cite{GuFu} have this property. Furthermore, while $Sym^{+}(\Sth, L)$ must be a finite subgroup of the special orthogonal group of dimension four, $SO(4)$, the work of Paoluzzi--Porti \cite[Theorem 1]{PaPo2009} shows that for any finite group $G$, there exists a hyperbolic link $L$ such that $G \cong Sym^{+}(\Sth \setminus L)$. In light of this wide variety of relations between symmetries of hyperbolic links and their complements, it is natural to ask what happens for other important classes of hyperbolic links, which motivated us to consider flat FALs.

Our work shows there is a basic combinatorial structure associated to a flat FAL $L$ that encodes many, and sometimes all, the orientation-preserving symmetries of $L$ and its complement. This structure is called a (painted) crushtacean graph $\Gamma$, which is a $3$-connected, planar, cubic graph with an assigned subset of ``painted'' edges that corresponds with a perfect matching of $\Gamma$. The graph $\Gamma$  can easily be constructed from a flat FAL diagram $D(L)$; see Section \ref{Sec:BackgroundCrush} and Figure \ref{fig:BuildCrush}. In what follows, we say that a symmetry  $\rho$ of a flat FAL $L$ (with a fixed flat FAL diagram) is \textbf{type-preserving} if $\rho$ maps knot circles to knot circles and maps crossing circles to crossing circles. In addition, $Aut_{p}(\Gamma)$ is the group of automorphisms of $\Gamma$ that preserves painted edges. 
 
 \begin{thm}
	\label{thm:main1}
	Let $L$ be a flat FAL with a painted crushtacean $\Gamma$. Then there exists a  monomorphism $\phi: Aut_{p}(\Gamma) \rightarrow Sym^{+}(\Sth, L)$. Furthermore, the image of $\phi$ only contains type-preserving symmetries of $L$. 
\end{thm}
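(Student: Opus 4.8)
The plan is to build the homomorphism $\phi$ geometrically, by realizing each graph automorphism as an isometry of a canonical hyperbolic structure on $\Sth \setminus L$. Recall that a flat FAL complement decomposes into two isometric ideal totally geodesic polyhedra, obtained by cutting along the reflection plane (the projection sphere) containing the knot circles; this decomposition is recorded combinatorially by the crushtacean $\Gamma$, with vertices of $\Gamma$ corresponding to ideal vertices (crossing circle cusps) and the painted edges encoding the knot-circle cusps. So first I would recall this polyhedral decomposition and the precise dictionary between the combinatorics of $\Gamma$ (with its painted perfect matching) and the face-pairing/gluing data of the two polyhedra.

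Next, given $\alpha \in Aut_p(\Gamma)$, I would show $\alpha$ induces a combinatorial automorphism of the polyhedral decomposition that respects all gluing data: because $\alpha$ preserves the graph structure it permutes vertices, edges and faces compatibly, and because it preserves painted edges it respects the partition distinguishing knot-circle cusps from crossing-circle cusps. By Mostow--Prasad rigidity (or directly, since the polyhedra are the canonical Epstein--Penner cell decomposition, or are right-angled/symmetric enough to be rigid), this combinatorial symmetry is realized by an actual isometry of $\Sth \setminus L$. I would then argue this isometry is orientation-preserving — one can arrange $\alpha$ to act on each of the two polyhedra and preserve the "sides" of the reflection plane, hence preserve orientation on $\Sth \setminus L$ — and that it extends over the cusps to a homeomorphism of the pair $(\Sth, L)$, giving an element $\phi(\alpha) \in Sym^{+}(\Sth, L)$. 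That it is type-preserving is immediate from the painted/unpainted distinction being respected. Functoriality of this construction ($\phi(\alpha\beta) = \phi(\alpha)\phi(\beta)$) follows because the realization is canonical.

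For injectivity, I would show that if $\phi(\alpha)$ is isotopic to the identity on $(\Sth, L)$, then $\alpha$ is the identity automorphism: an isometry isotopic to the identity fixes each cusp and acts trivially on the canonical decomposition, hence induces the trivial permutation of the vertices and edges of $\Gamma$; since $\Gamma$ is $3$-connected, an automorphism fixing all vertices is the identity. (The $3$-connectedness hypothesis is exactly what one needs here, and also what makes the planar embedding — hence the polyhedral structure — essentially unique by Whitney's theorem.)

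The main obstacle I expect is the rigidity/realization step: carefully justifying that an abstract combinatorial automorphism of the polyhedral decomposition is realized by a genuine isometry, with correct control of orientation and of the behavior at the cusps. This requires either invoking uniqueness of the canonical (Epstein--Penner) decomposition together with Mostow--Prasad rigidity, or exploiting the explicit totally geodesic right-angled structure of flat FAL polyhedra to write the isometry down directly; coordinating this with the reflection symmetry across the projection plane so that orientation is preserved is the delicate point.
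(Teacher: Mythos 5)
The central gap is the passage from an isometry of $\Sth \setminus L$ to an element of $Sym^{+}(\Sth, L)$. Even granting the realization step you flag as the main obstacle (a combinatorial automorphism of the two-polyhedron decomposition being realized by an isometry compatible with all gluings), what you obtain is only a self-isometry of the cusped hyperbolic manifold. Such an isometry gives a symmetry of the pair $(\Sth, L)$ only if it carries the meridian slope of each cusp to a meridian slope, so that it extends over the solid tori of the Dehn filling that recovers $\Sth$; you assert that it ``extends over the cusps to a homeomorphism of the pair'' with no argument. This is not a formality in this setting: the containment $Sym^{+}(\Sth, L) \subseteq Sym^{+}(\Sth \setminus L)$ can be strict for flat FALs (the full-swap symmetries of signature links are isometries of the complement that do not come from symmetries of the link, as the paper itself emphasizes), so you would need to prove that your combinatorially defined isometries preserve meridians on every crossing-circle and knot-circle cusp. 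Relatedly, the orientation bookkeeping is hand-waved: a ``reflective'' automorphism of the planar graph is realized on each polyhedron by an orientation-reversing map, and to land in $Sym^{+}$ you must compose with the reflection exchanging the two polyhedra; after this correction you must re-verify that the assignment is still a homomorphism (a commutation check you do not supply), and the appeal to the Epstein--Penner decomposition is an unproved identification you do not actually need.

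For contrast, the paper never enters the hyperbolic complement, which is exactly how it avoids both issues. Via Steinitz's and Mani's theorems, $\Gamma$ is realized as the $1$-skeleton of a polyhedron $P \subset \mathbb{R}^{3}$ with edges tangent to the unit sphere and $Aut_{p}(\Gamma) \cong Isom_{p}(\mathbb{R}^{3}, P)$; then a copy of $L$ is built explicitly in a neighborhood of $P$ (crossing circles as small circles orthogonal to $\mathbb{S}^{2}$ centered over the tangency points of painted edges, knot circles from arcs parallel to painted edges joined by segments of unpainted edges) so that $L$ is setwise invariant under every such isometry and under inversion $r$ in $\mathbb{S}^{2}$. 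Each $f$ is then literally a rigid symmetry of the pair $(\Sth, L)$ after extending over $\infty$, with $\phi(f) = f^{\ast}$ or $r \circ f^{\ast}$ according to orientation, and the homomorphism and injectivity checks reduce to the elementary fact that $f^{\ast}$ commutes with $r$. If you wish to keep your route through the polyhedral decomposition, you would need to supply (i) the realization of painted automorphisms by isometries respecting the gluings, and (ii) the meridian-preservation/extension argument above; as written these are genuine gaps.
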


Since $Sym^+(\Sth, L) \subseteq Sym^+(\Sth \setminus L)$, the map $\phi$ can also be interpreted as a monomorphism from $Aut_{p}(\Gamma)$ to $Sym^+(\Sth \setminus L)$. 

This monomorphism is quite useful since automorphisms of $3$-connected planar graphs are well studied objects. Whitney's Theorem \cite{WH1932} shows that a $3$-connected planar graph $\Gamma$ has a unique embedding on the $2$-sphere and Mani's Theorem \cite{Mani1971} shows that this unique embedding of $\Gamma$ can be realized on the $2$-sphere so that each automorphism extends to an isometry of the $2$-sphere. As a result, $\phi(Aut_{p}(\Gamma)) \leq Sym^{+}(\Sth, L)$ must be isomorphic to a finite subgroup of $O(3)$, the orthogonal group of dimension $3$.  However, the index $[Sym^{+}(\Sth, L): \phi(Aut_{p}(\Gamma))]$ can be arbitrarily large; see Section \ref{subsec:On}. Therefore, it would help to know when $\phi$ is onto, which happens under two particular geometric conditions on $\Sth \setminus L$, which we now describe.

 We say that a flat FAL $L$ is \textbf{$b$-composite} if $\Sth \setminus L$ it admits a separating pair of totally geodesic thrice-punctured spheres, which by cutting along and re-gluing their boundaries in pairs, decomposes $L$ into a pair of flat FALs; see Figure \ref{fig:BSD}. If $L$ is not equivalent to the Borromean rings and does not admit such a decomposition, then we say that $L$ is \textbf{$b$-prime}. The decomposition of FAL complements into a set of $b$-prime FAL complements and Borromean ring complements was studied in \cite{MRSTZ2025}. In particular, Theorem 5.2 of \cite{MRSTZ2025} shows that an FAL is $b$-prime if and only if its corresponding painted crushtacean meets a simple combinatorial condition. This makes it straightforward to detect whether a flat FAL is $b$-composite, and then,  decompose it into $b$-prime and Borromean ring components. 
 
\begin{figure}[ht]
\begin{center}
\includegraphics{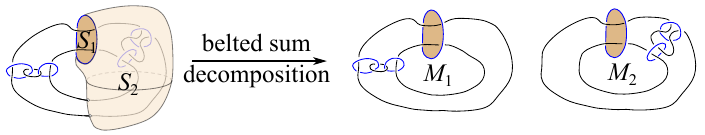}
\end{center}
	\caption{Decomposing a $b$-composite FAL into its $b$-prime summands.}
	\label{fig:BSD}
\end{figure}
 
 On the other hand, reflection in the projection plane for a flat FAL diagram $D(L)$ is a symmetry of $(\Sth, L)$. By Mostow-Prasad Rigidity, this reflection corresponds with an isometry of $\Sth \setminus L$ whose fixed-point set is a totally geodesic surface, which we call a \textbf{reflection surface} for $\Sth \setminus L$ or $D(L)$.   It is possible for $\Sth \setminus L$ to admit multiple reflection surfaces, though such flat FALs were classified by previous work of the authors in \cite[Theorem 1.2]{MiTr2023}. Together, the properties of being $b$-prime and having a unique reflection surface guarantee $\phi$ is onto. If $L$ also fails to be a signature link, which is a flat FAL with a very specific structure, then $\phi$ also captures the symmetries of the corresponding link complement. We refer the reader to \cite[Section 5]{MiTr2023} for specifications on signature links.

 \begin{cor}
 	\label{cor:main1}
 	Let $L$ be a $b$-prime flat FAL whose complement admits a unique reflection surface.  Then $Sym^+(\Sth, L) \cong Aut_{p}(\Gamma)$. If in addition $L$ is not a signature link, then $Sym^+(\Sth \setminus L) \cong Sym^+(\Sth, L) \cong Aut_{p}(\Gamma)$.
 \end{cor}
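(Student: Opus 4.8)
The plan is to turn the monomorphism $\phi$ of Theorem~\ref{thm:main1} into an isomorphism by establishing surjectivity; injectivity is already in hand. Fix $\rho \in Sym^{+}(\Sth, L)$ and, by Mostow--Prasad rigidity, realize $\rho$ as an orientation-preserving isometry of $N := \Sth \setminus L$. The first step is to show $\rho$ preserves the reflection surface $S$. Let $\iota$ be the isometric involution of $N$ coming from reflection in the projection plane of the fixed flat FAL diagram $D(L)$; it is orientation-reversing with two-dimensional fixed-point set $S$. Then $\rho \iota \rho^{-1}$ is again an orientation-reversing involutive isometry of $N$, and its fixed-point set is the totally geodesic surface $\rho(S)$; since $N$ admits a unique reflection surface, $\rho(S) = S$. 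Hence $\rho$ restricts to a symmetry of $S$ and permutes the two components of $N \setminus S$, either preserving or interchanging them.

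The second step promotes this to an automorphism of the painted crushtacean. The surface $S$, together with the way it meets the cusps of $N$ and the two complementary pieces of $N$, records the combinatorial data of $D(L)$, hence of $\Gamma$ with its painted perfect matching, so $\rho|_S$ determines a bijection $\alpha$ of the vertices and edges of $\Gamma$. To see $\alpha$ preserves painted edges I would use that $L$ is $b$-prime and not the Borromean rings: each crossing circle bounds a totally geodesic twice-punctured disk in $\Sth$ (equivalently, its cusp is incident to a non-peripheral totally geodesic thrice-punctured sphere of the corresponding type), a property that no knot circle enjoys for a $b$-prime FAL distinct from the Borromean rings; since isometries permute totally geodesic thrice-punctured spheres, $\rho$ must be type-preserving, so $\alpha$ sends painted edges to painted edges and lies in $Aut_{p}(\Gamma)$. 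According to whether $\rho$ preserves or interchanges the two sides of $S$, the automorphism $\alpha$ is realized on the underlying $2$-sphere by an orientation-preserving or an orientation-reversing isometry, both of which are permitted in $Aut_{p}(\Gamma)$ by Mani's Theorem. Finally, one checks $\phi(\alpha) = \rho$: this amounts to verifying that the dictionary constructed in the proof of Theorem~\ref{thm:main1}, applied to $\alpha$, reproduces the isometry $\rho$ on each polyhedral piece of $N$ cut along $S$ and along the crossing disks. This yields $Sym^{+}(\Sth, L) \cong Aut_{p}(\Gamma)$.

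For the second statement, since $Sym^{+}(\Sth, L) \subseteq Sym^{+}(\Sth \setminus L)$ always, it suffices to show every orientation-preserving isometry $\rho$ of $N$ extends over the pair $(\Sth, L)$, i.e. carries the meridian slope of each cusp to a meridian slope. On a crossing-circle cusp the meridian is cut out by the totally geodesic twice-punctured disk through it, so by the first part it is preserved. The remaining task is to rule out $\rho$ sending a knot-circle meridian to a non-meridian slope (or a knot cusp to a crossing cusp with an incompatible slope); this is precisely the phenomenon exhibited by signature links, so excluding signature links — invoking the structural description in \cite[Section~5]{MiTr2023} — forces $\rho$ to preserve every meridian and hence to extend. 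Combining with the first part gives $Sym^{+}(\Sth \setminus L) \cong Sym^{+}(\Sth, L) \cong Aut_{p}(\Gamma)$.

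The conceptual crux is the first step, though it is short given uniqueness of the reflection surface. The main technical obstacle I anticipate is the final verification that the abstractly constructed symmetry $\phi(\alpha)$ literally equals $\rho$ — matching the geometric realization of $\phi$ against the given isometry piece by piece — together with the slope bookkeeping on knot-circle cusps and pinning down exactly where the signature-link hypothesis is used in the second statement.
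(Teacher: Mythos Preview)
Your approach is essentially the paper's: show every $\rho\in Sym^{+}(\Sth,L)$ preserves the reflection surface, show it is type-preserving, and then read off an element of $Aut_p(\Gamma)$ inverse to $\phi$. Two points of comparison are worth flagging. First, your argument for type-preservation (``crossing circles bound totally geodesic thrice-punctured spheres, knot circles do not'') is stated without justification, and the cleanest way to justify it in fact reduces to the paper's more direct argument: a type-changing $\rho$ would carry the given flat FAL diagram to a genuinely different one (with a different crossing/knot partition), and hence produce a second reflection surface, contradicting the hypothesis. The paper uses this contrapositive directly and avoids having to classify which components can bound thrice-punctured spheres. Second, for the complement statement the paper simply invokes \cite[Theorem~1.3]{MiTr2023}, which already gives $Sym^{+}(\Sth\setminus L)\cong Sym^{+}(\Sth,L)$ under the stated hypotheses; your meridian-slope bookkeeping is in the right spirit but is more work than needed here. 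The verification $\phi(\alpha)=\rho$ that you flag as a technical obstacle is handled in the paper via Lemma~\ref{lem:induceAuto}, which packages the passage from a type-preserving symmetry back to an automorphism of $\Gamma$ through the pre-crushtacean; you would want an analogous lemma rather than a piece-by-piece check on the polyhedra.
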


Corollary \ref{cor:main1} and our work in Section \ref{sec:MultRef} make it straightforward to determine $Sym^+(\Sth, L)$, for any $b$-prime flat FAL $L$. If $\Sth \setminus L$ admits a unique reflection surface, then consider its crushtacean $\Gamma$, which is unique in this case (see Corollary \ref{cor:bprimeunique}), and analyze $Aut_{p}(\Gamma)$. If $\Sth \setminus L$ admits multiple reflection surfaces, then $L$ is equivalent to either the Borromean rings or a link from one of two specific infinite classes of flat FALs. Such FALs and the symmetry groups of these links and their complements are classified in Section \ref{sec:MultRef}. Based on these two possibilities, we see that $Sym^{+}(\Sth, L)$ must be isomorphic to a finite subgroup of $O(3)$. In Section \ref{sec:classifying}, we give a combinatorial construction using planar $3$-connected graphs to show how to explicitly build a $b$-prime flat FAL with any possible desired symmetry group. In fact, we can modify our combinatorial construction to prove the following theorem.

\begin{thm}
	\label{thm:main2}
	Let $G$ be an abstract group. Then $G \cong Sym^{+}(\Sth, L)$ for some b-prime flat FAL $L$ if and only if $G$ is isomorphic to a  finite subgroup of $O(3)$. Furthermore, for every finite $G \leq O(3)$, there exist an infinite class of distinct b-prime flat FALs $\{L_i\}$ such that $Sym^{+}(\Sth \setminus L_i) \cong Sym^{+}(\Sth, L_i) \cong G$, for all $i$. 
\end{thm}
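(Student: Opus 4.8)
The plan is to prove the two implications separately. The necessity direction essentially repackages results already established. Let $L$ be a $b$-prime flat FAL with $G \cong Sym^{+}(\Sth, L)$. Because $L$ is hyperbolic, Mostow--Prasad Rigidity identifies $Sym^{+}(\Sth, L)$ with a subgroup of the finite orientation-preserving isometry group of $\Sth \setminus L$, so $G$ is finite. To exhibit $G$ as a subgroup of $O(3)$, split into cases on the number of reflection surfaces of $\Sth \setminus L$. If there is a unique reflection surface, then by Corollary~\ref{cor:main1} we have $G \cong Aut_{p}(\Gamma)$ for the painted crushtacean $\Gamma$ of $L$, which is unique by Corollary~\ref{cor:bprimeunique}; Whitney's Theorem and Mani's Theorem realize the $2$-sphere embedding of $\Gamma$ so that $Aut_{p}(\Gamma)$ acts by isometries of the $2$-sphere, placing $G$ inside $O(3)$. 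If instead $\Sth \setminus L$ admits more than one reflection surface, then, as $L$ is $b$-prime (hence not the Borromean rings), the classification in Section~\ref{sec:MultRef} puts $L$ in one of two explicit infinite families whose symmetry groups are computed there and are finite subgroups of $O(3)$. In every case $G \leq O(3)$.

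For the converse and the ``furthermore'' clause, fix a finite subgroup $G \leq O(3)$. It suffices to build, for each $i \in \mathbb{N}$, a painted crushtacean $\Gamma_{i}$ --- a $3$-connected planar cubic graph whose painted edges form a perfect matching --- with the following properties: (i) $Aut_{p}(\Gamma_{i}) \cong G$; (ii) $\Gamma_{i}$ satisfies the combinatorial criterion of \cite[Theorem 5.2]{MRSTZ2025}, so the associated flat FAL $L_{i}$ is $b$-prime; (iii) the diagram $D(L_{i})$ has a unique reflection surface, by the criterion of \cite[Theorem 1.2]{MiTr2023}; (iv) $L_{i}$ is not a signature link; and (v) the $\Gamma_{i}$ are pairwise non-isomorphic. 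Given such $\Gamma_{i}$, Theorem~\ref{thm:main1} and Corollary~\ref{cor:main1} yield $Sym^{+}(\Sth \setminus L_{i}) \cong Sym^{+}(\Sth, L_{i}) \cong Aut_{p}(\Gamma_{i}) \cong G$ for all $i$, and (v) forces the complements $\Sth \setminus L_{i}$ to be pairwise non-homeomorphic, so the $L_{i}$ are distinct $b$-prime flat FALs, which completes the proof.

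To construct $\Gamma_{i}$, begin with a ``seed'' graph $\Gamma^{0}$ carrying a geometric action of $G$ as a subgroup of $O(3)$: for the polyhedral groups one can take the $1$-skeleton of the appropriate Platonic solid (each already cubic, planar, and $3$-connected), and for the cyclic and dihedral groups a prism- or antiprism-type graph, so that $G \leq Aut(\Gamma^{0})$ realized through its standard action on the $2$-sphere. Generically $Aut(\Gamma^{0})$ is strictly larger than $G$, so next pick a face (or edge) $f$ of $\Gamma^{0}$ with trivial $G$-stabilizer and graft onto each face in the orbit $G \cdot f$ a small planar ``gadget'' $H_{i}$ chosen so that: (a) grafting preserves planarity, cubicness, and $3$-connectivity; (b) the ambient perfect matching extends over every copy of $H_{i}$, keeping the painted edges a perfect matching compatible with the $G$-action; (c) $H_{i}$, viewed as a rooted graph, has trivial automorphism group, so no automorphism of the decorated graph can act nontrivially within a gadget copy or permute distinct copies beyond the orbit action of $G$; and (d) $H_{i}$ is a chain of length $i$ assembled from a fixed building block, so distinct $i$ give genuinely different graphs. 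One then checks that (c) forces $Aut_{p}(\Gamma_{i}) = G$; that $H_{i}$ can be arranged to contain no separating pair of triangles of the kind forbidden by \cite[Theorem 5.2]{MRSTZ2025}, certifying $b$-primeness; and that the built-in asymmetry of $H_{i}$ rules out a second reflection surface as well as the signature-link structure. Finally $\Gamma_{i} \not\cong \Gamma_{j}$ for $i \neq j$ since, for instance, the number of crossing circles of $L_{i}$ --- equivalently the number of cusps or the volume of $\Sth \setminus L_{i}$ --- strictly increases with $i$.

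The main obstacle I expect is condition (i), i.e.\ arranging that the painted-automorphism group of the decorated graph is \emph{exactly} $G$: large enough that every element of $G$ still extends over $\Gamma_{i}$, yet rigid enough that no new symmetry is introduced, either inside a gadget or permuting gadget copies. Verifying this cleanly requires a case analysis over the finitely many conjugacy types of finite subgroups of $O(3)$ (cyclic, dihedral, $A_{4}$, $S_{4}$, $A_{5}$, and their extensions by $-I$), while simultaneously maintaining $3$-connectivity, cubicness, a compatible perfect matching, and the $b$-primeness criterion throughout. By contrast, conditions (iii)--(v) should follow routinely once the rigid gadget is in place, and the necessity direction is immediate from the results summarized above.
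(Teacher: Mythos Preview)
Your necessity direction is essentially the paper's argument: split on the number of reflection surfaces, invoke Corollary~\ref{cor:main1} together with Mani's Theorem in the unique case, and appeal to the explicit computations of Section~\ref{sec:MultRef} otherwise. One small correction: among the multiple-reflection-surface links, the $O_n$ are all $b$-composite (the crossing circle $C_0$ bounds several crossing disks), so a $b$-prime $L$ with more than one reflection surface must be some $P_n$; your conclusion survives, but the ``two explicit infinite families'' should be one.

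Your sufficiency direction is a genuinely different route from the paper's, and as written it is only a sketch with the hard part deferred. The paper avoids the entire gadget-grafting program by quoting Babai's theorem (Main Corollary 8.12D of \cite{Babai1973}) to obtain, for any finite $G\le O(3)$, a simple planar $3$-connected graph $\Gamma'$ with $Aut(\Gamma')\cong G$, and then applying a single uniform operation, \emph{cycle expansion}: replace each vertex of $\Gamma'$ by a cycle of the appropriate length and paint exactly the original edges. Lemma~\ref{lem:CycleExpansion} shows in one stroke that the result is a cubic, $3$-connected, planar crushtacean whose flat FAL is $b$-prime and that $Aut_p(\Gamma)=Aut(\Gamma)\cong Aut(\Gamma')\cong G$; iterating cycle expansion gives the infinite family, uniqueness of the reflection surface is checked against the short list in Section~\ref{sec:MultRef}, and Lemma~\ref{lem:MultCycEx} handles the signature-link condition for $i\ge 2$. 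The point is that Babai does the delicate ``exactly $G$'' work once and for all, and cycle expansion is rigid enough that no new automorphisms appear.

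By contrast, your plan manufactures $\Gamma'$ by hand. The obstacle you flag is real and is not discharged: you need a face or edge of the seed graph with trivial $G$-stabilizer (not always available for the polyhedral groups on the Platonic skeleta), a rooted planar cubic gadget with trivial automorphism group that can be grafted while preserving $3$-connectivity and a $G$-equivariant perfect matching, and then an argument that no automorphism permutes gadget copies outside the $G$-orbit. None of these are impossible, but together they amount to reproving a special case of Babai's theorem, and your proposal stops at ``one then checks.'' If you want to pursue this line, you should either carry out the case analysis over the finite subgroups of $O(3)$ in full, or simply invoke Babai and then look for a uniform way to pass from an arbitrary $3$-connected planar graph to a crushtacean without disturbing $Aut$; the paper's cycle expansion is exactly such a device.
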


Our paper is organized as follows. In Section \ref{Sec:BackgroundCrush}, we review how to construct a crushtacean graph for a flat FAL and provide a complete graph-theoretic characterization of crushtaceans for (hyperbolic) flat FALs. In Section \ref{sec:symFALviaautocrush}, we prove Theorem \ref{thm:main1} and Corollary \ref{cor:main1}. In Section \ref{sec:classifying}, we develop a basic combinatorial construction that we call a cycle expansion on a $3$-connected planar graph in order to build $b$-prime flat FALs that meet the qualifications of Theorem \ref{thm:main2}. In Section \ref{sec:MultRef}, we analyze flat FALs with multiple reflection surfaces and determine the orientation-preserving symmetry groups for such links and their corresponding complements. In Section \ref{sec:Conclusion}, we provide a quick  proof of Theorem \ref{thm:main2} and share some remaining questions with regards to symmetries of flat FALs. 

We would like to that Furman University for financially supporting in-person research meetings for this project via a Furman Standard Grant.


\section{Crushtacean graphs for flat FALs}
\label{Sec:BackgroundCrush}

The \textbf{painted crushtacean} for a flat FAL diagram $D(L)$, denoted $\Gamma(L)$, is the painted cubic graph constructed by replacing each crossing circle with a painted edge, as shown in Figure \ref{fig:BuildCrush}. When it is clear from context, we abbreviate $\Gamma(L)$ as $\Gamma$. Crushtaceans of FALs were first introduced by Chesebro--DeBlois--Wilton in \cite{ChDeWi}.  Purcell \cite{Pu4} showed that $\Gamma(L)$ is the dual graph to $\gamma(L)$, the (painted) nerve graph corresponding to an FAL diagram of $L$. Crushtaceans were originally defined relative to the FAL polyhedral decomposition into two identical totally geodesic hyperbolic polyhedra whose boundary faces are checkerboard colored. Since we will not make explicit use of this polyhedral decomposition, we refer the reader to \cite[Section 7]{ChDeWi} for details on the original definition.

\begin{figure}[ht]
	\centering
	\begin{overpic}[scale=0.60]{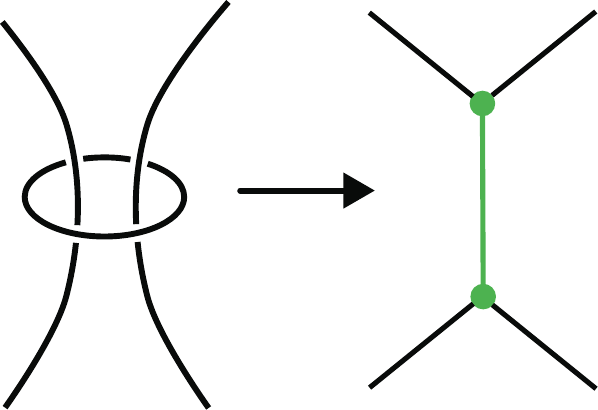}
		\put(33, 50){$L$}
		\put(90,50){$\Gamma$}
		\put(84,35){$p$}
	\end{overpic}
	\caption{The local picture of building a crushtacean $\Gamma$ from a flat FAL diagram for $L$.}
	\label{fig:BuildCrush}
\end{figure}

We would like to provide a graph-theoretic classification of crushtaceans that correspond to \textit{hyperbolic} flat FALs. We first note several of the essential characteristics of such graphs, which are also discussed in Zevenbergen \cite{Ze2021}.

\begin{lemma}
	\label{lem:crushprops}
Let $L$ be a (hyperbolic) flat FAL and let $\Gamma$ be a painted crushtacean graph corresponding to a flat FAL diagram for $L$. Then $\Gamma$ is a planar, cubic, $3$-connected graph with at least four vertices such that each vertex is incident to exactly one painted edge. 
\end{lemma}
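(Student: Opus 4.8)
The plan is to track each of the five claimed properties --- cubic, planar, at least four vertices, $3$-connected, and each vertex incident to exactly one painted edge --- through the explicit construction of $\Gamma$ from the flat FAL diagram $D(L)$ depicted in Figure \ref{fig:BuildCrush}, and then invoke Purcell's hyperbolicity criterion (\cite[Theorem 2.5]{Pu4}) to upgrade the weaker combinatorial properties to $3$-connectedness. First I would recall the local replacement rule: each crossing circle of $D(L)$, which encircles exactly two strands of knot circle, is collapsed to a single painted edge $p$ whose two endpoints are the points where the knot strands pass through the crossing disk; every other arc of the knot-circle diagram is left alone, and the over/under information is irrelevant since the diagram is flat. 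From this description, the vertices of $\Gamma$ are exactly the crossings of the underlying knot-circle diagram $D(L')$ (equivalently, the crossings in the ``flattened'' regions between augmentations), together with the endpoints of painted edges; a short case check shows that at each such vertex exactly three edge-ends meet --- two from the knot-circle strands and one painted, or in a genuine crossing of $D(L')$ four strand-ends that get resolved into the cubic pattern --- so $\Gamma$ is cubic and each vertex meets exactly one painted edge. Planarity is immediate because the construction takes place in the projection plane: $\Gamma$ is obtained from the planar diagram $D(L)$ by contractions and deletions, each of which preserves planarity.

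Next I would address the vertex count. Purcell's criterion requires the diagram $D(L')$ to be prime, non-split, twist-reduced, and to contain at least two twist regions; augmenting at least two twist regions produces at least two crossing circles, hence at least two painted edges, hence at least four vertices. (One should also note that $\Gamma$ has no loops or multi-edges apart from what is forced: a loop or a ``bigon'' of non-painted edges would correspond to a nugatory crossing or a non-prime/non-twist-reduced feature of $D(L')$, contradicting the hypotheses; this is where primality and twist-reducedness get used.)

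The substantive point, and the step I expect to be the main obstacle, is $3$-connectedness. The natural route is to argue the contrapositive: a $2$-cut (or worse) in $\Gamma$ would produce, via the crushtacean--diagram dictionary, a $2$-cut in the planar structure of $D(L)$ that translates into the diagram $D(L')$ being either non-prime, split, or not twist-reduced. Concretely, I would show that a pair of vertices (or a pair of edges) whose removal disconnects $\Gamma$ corresponds to a simple closed curve in the projection plane meeting $D(L)$ in at most two points, which after undoing the augmentation is a simple closed curve meeting $D(L')$ in at most two points and separating crossings on both sides --- precisely the kind of curve ruled out by primality and twist-reducedness, or by non-splittability in the degenerate case where the curve meets the diagram in zero strands. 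Since this equivalence is really the content of why crushtaceans are dual to nerve graphs of FAL polyhedra, I would cite \cite{Pu4} and \cite{ChDeWi} for the structural correspondence and only supply the short combinatorial translation of ``$k$-cut in $\Gamma$'' into ``bad curve in $D(L')$,'' rather than redeveloping the polyhedral decomposition. An alternative, cleaner finish would be to observe that $\Gamma$ is the dual of the nerve $\gamma(L)$ and that $\gamma(L)$ is $3$-connected precisely under Purcell's conditions (a fact essentially recorded in \cite{Pu4}), using the classical fact that planar duality preserves $3$-connectedness; I would likely present whichever of these two the surrounding sections have set up, noting that Zevenbergen \cite{Ze2021} records the same list of properties.
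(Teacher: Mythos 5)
Your ``cleaner finish'' is essentially the paper's own argument, and it is the branch that actually works: the paper quotes Purcell's Lemma 2.3 from \cite{Pu4}, which says the nerve $\gamma(L)$ is a triangulation of $\mathbb{S}^{2}$, so $\gamma$ is a polyhedral graph, its planar dual $\Gamma$ is again polyhedral, and polyhedral graphs are $3$-connected --- the same duality-preserves-$3$-connectedness observation you make. By contrast, your preferred route (translating a $2$-cut of $\Gamma$ into a simple closed curve meeting $D(L')$ in at most two points and then invoking primality, twist-reducedness, and non-splittability) is only sketched: the ``short combinatorial translation'' you defer is exactly the content that Purcell's lemma packages, so as written that branch is a citation plus a promise rather than a proof; if you keep it, you must actually exhibit the curve associated to a $2$-vertex (equivalently, in a cubic graph, $2$-edge) cut and check the painted versus unpainted cases separately. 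One genuine inaccuracy to fix: in a flat FAL the knot circles are disjoint embedded curves in the projection plane, so there are no residual crossings of $D(L')$ to ``resolve''; the vertices of $\Gamma$ are precisely the endpoints of the painted edges, each incident to one painted edge and two knot-circle arcs, and your extra case of ``a genuine crossing of $D(L')$ with four strand-ends'' does not occur (the paper treats cubicity and the painted-edge condition as immediate from Figure \ref{fig:BuildCrush}). Your vertex count (at least two crossing circles, hence at least four vertices) matches the paper, and your aside about loops and bigons is not needed here --- simplicity is handled separately in Lemma \ref{lem:simplecriteria} as a consequence of cubic, $3$-connected, and having at least four vertices.
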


\begin{proof}
	$\Gamma$ is clearly planar and cubic by construction. 	Figure \ref{fig:BuildCrush} also makes it clear that each vertex in $\Gamma$ is incident to exactly one painted edge. 

	 Next, we justify $\Gamma$ is $3$-connected. Purcell \cite[Lemma 2.3]{Pu4} showed that the nerve $\gamma$ of $L$ is a triangulation of $\mathbb{S}^{2}$. Thus, $\gamma$ is a polyhedral graph (the $1$-skeleton of a convex polyhedron). Since $\Gamma$ is dual to a polyhedral graph $\gamma$ it is also a polyhedral graph, and since all polyhedral graphs are $3$-connected, we conclude $\Gamma$ is  $3$-connected. 
	 
	 
	  $\Gamma$ is required to have at least four vertices since any hyperbolic FAL needs at least two crossing circles and each crossing circle contributes two distinct vertices in the construction of $\Gamma$. 
\end{proof}

\begin{lemma}
	\label{lem:simplecriteria}
	 If $\Gamma$ is a cubic and $3$-connected graph with at least four vertices, then it is a simple graph. In particular, a crushtacean is a simple graph. 
\end{lemma}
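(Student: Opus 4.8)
The plan is to rule out loops and multiple edges separately, each time by producing a vertex cut of size at most $2$ and contradicting $3$-connectivity (recall that $3$-connectivity requires $\Gamma - S$ to be connected for every $S \subseteq V(\Gamma)$ with $|S| \le 2$). The only facts used are that $\Gamma$ is $3$-regular and has at least four vertices.

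First I would show $\Gamma$ has no loop. If some vertex $v$ carries a loop, the loop uses two of the three edge-ends at $v$, so $v$ has exactly one further edge, to a vertex $w \ne v$. Every edge from $v$ to $V(\Gamma) \setminus \{v,w\}$ must then pass through $w$; since $|V(\Gamma)| \ge 4$ the set $V(\Gamma) \setminus \{v, w\}$ is nonempty, so $\Gamma - w$ is disconnected and $w$ is a cut vertex, contradicting $3$-connectivity.

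Next, knowing $\Gamma$ is loopless, I would rule out parallel edges. Suppose $u \ne v$ are joined by $k \ge 2$ edges. Then $u$ has exactly $3 - k$ edges leaving $\{u, v\}$, and likewise for $v$. If $k = 3$ there are none, so $\{u, v\}$ is a connected component and $\Gamma$ is disconnected (there are other vertices since $|V(\Gamma)| \ge 4$) — impossible. If $k = 2$, let $a$ be the unique neighbor of $u$ outside $\{u,v\}$ and $b$ that of $v$; both lie in $V(\Gamma) \setminus \{u,v\}$ since $\Gamma$ is loopless and there is no third $uv$-edge. Removing the set $\{a, b\}$ (of size at most $2$) leaves $u, v$ joined by their double edge but with no edge to $V(\Gamma) \setminus \{u, v, a, b\}$; if that set is nonempty this again contradicts $3$-connectivity.

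The only residual case — and the one step requiring any care — is $|V(\Gamma)| = 4$ with $k = 2$ and $a \ne b$, so that $V(\Gamma) = \{u,v,a,b\}$. Here a short degree count pins $\Gamma$ down completely: $a$ and $b$ each still need two edge-ends, none of which can land on $u$ or $v$ without exceeding degree $3$, so $\Gamma$ has double edges $uv$ and $ab$ and single edges $ua$, $vb$; then $\Gamma - \{u, b\}$ is disconnected, contradicting $3$-connectivity. (One can bypass this case entirely by invoking Whitney's inequality $\kappa(\Gamma) \le \lambda(\Gamma)$: a loop gives an edge cut of size $1$ and a multi-edge an edge cut of size at most $2$, either way contradicting $\lambda(\Gamma) \ge 3$.) Finally, the ``in particular'' clause is immediate: by Lemma \ref{lem:crushprops} a crushtacean is cubic, $3$-connected, and has at least four vertices, hence is simple.
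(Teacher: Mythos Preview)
Your proof is correct and follows essentially the same strategy as the paper's: exhibit a cut vertex to rule out loops, then a $2$-vertex cut to rule out parallel edges. The only difference is the specific cut in the multi-edge case---the paper removes one vertex of the multi-edge pair together with the outside neighbor of the \emph{other} (isolating a single vertex), which handles all $|V(\Gamma)|\ge 4$ at once; your choice of removing both outside neighbors $\{a,b\}$ is equally valid but forces the small residual $|V(\Gamma)|=4$ case, which you dispatch correctly.
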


\begin{proof}
	Suppose $\Gamma$ is cubic and $3$-connected with at least four vertices. If $\Gamma$ had a self-loop $l$ based on a vertex $v$, then  $l$ and only one other edge, $e$, are incident to $v$. Then there are two more edges incident to  the other edge incident to $e$ which we label $v'$, and neither of these edges are incident to $v$. Thus, removing $v'$ would result in a disconnected graph that is non-trivial since $\Gamma$ has at least four vertices, which violates the fact that $\Gamma$ is $3$-connected. Next, suppose $\Gamma$ has at least two distinct edges incident to distinct vertices $v_1$ and $v_2$. If there is a third edge incident both $v_1$ and $v_2$, then since this graph is cubic, no other edges are adjacent to $v_1$ or $v_2$. In this case,  $\Gamma$ is either disconnected or $\Gamma$ has only two vertices, both of which are contradictions. So, instead, suppose $e_1$ is the one other edge incident to $v_1$ and incident to $v_{1}'$ with $v_{1}' \neq v_{2}$, and $e_2$ is the one other edge incident to $v_2$ and incident to $v_{2}'$, with $v_{2}' \neq v_{1}$. Then removing $v_{1}$ and $v_{2}'$  disconnects $\Gamma$ into two non-trivial components, which violates the fact that $\Gamma$ is $3$-connected. 
	
	The conclusion about crushtaceans immediately follows from Lemma \ref{lem:crushprops}.
\end{proof}

A simple planar graph $G$ is a \textbf{triangulation} of $\mathbb{S}^{2}$ if it admits a planar embedding where each face borders three edges and no distinct faces share more than one edge. 

\begin{prop}
	\label{prop:crushchar}
	A graph $\Gamma$ with at least four vertices represents a painted crushtacean for a hyperbolic flat FAL $L$ if and only if $\Gamma$ has the following properties:
	\begin{enumerate}
		\item planar,
		\item cubic,
		\item $3$-connected, and
		\item the set of painted edges is a perfect matching of $\Gamma$.
	\end{enumerate}
\end{prop}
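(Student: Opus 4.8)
The plan is to prove both directions of Proposition \ref{prop:crushchar} by leveraging the duality between the crushtacean $\Gamma$ and the nerve $\gamma$ of an FAL, together with Steinitz's theorem. The forward direction ($\Gamma$ is a crushtacean $\Rightarrow$ properties (1)--(4)) is essentially Lemmas \ref{lem:crushprops} and \ref{lem:simplecriteria}: planarity, cubicity, and $3$-connectedness are recorded there, and the condition that each vertex is incident to exactly one painted edge is exactly the statement that the painted edges form a perfect matching. So the forward direction requires only assembling these pieces.

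The substantive content is the converse. Suppose $\Gamma$ satisfies (1)--(4). First I would invoke Steinitz's theorem (or equivalently the combination of Whitney's and Mani's theorems already cited): a planar $3$-connected graph is the $1$-skeleton of a convex polyhedron, and its planar dual $\gamma$ is then also a planar $3$-connected graph. Since $\Gamma$ is cubic, the dual $\gamma$ has all faces triangles, i.e.\ $\gamma$ is a triangulation of $\mathbb{S}^2$ in the sense defined just before the proposition — here I would need to check the "no two distinct faces share more than one edge" condition, which follows from $\Gamma$ being simple (Lemma \ref{lem:simplecriteria}) and $3$-connected via a short dual-graph argument. Next, the perfect matching of painted edges of $\Gamma$ dualizes to a collection of edges of $\gamma$, one crossing each painted edge; because the painted edges form a perfect matching and $\Gamma$ is cubic, each triangular face of $\gamma$ meets exactly one such dual edge, so these edges pair up the triangles of $\gamma$ into "bigons" that record twist regions. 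Then I would reverse the construction in Figure \ref{fig:BuildCrush}: from $\gamma$ with this extra data, reconstruct a flat FAL diagram $D(L)$ by placing a knot-strand structure dual to $\gamma$ and inserting a crossing circle at each painted-edge location. This realizes $\Gamma$ as $\Gamma(L)$ for an explicit flat FAL $L$.

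The remaining point is hyperbolicity of $L$. By Purcell's criterion (\cite[Theorem 2.5]{Pu4}), it suffices to exhibit a link diagram $D(L')$ that is non-splittable, prime, twist-reduced, with at least two twist regions, whose augmentation yields $D(L)$. The at-least-two-twist-regions condition is immediate from $\Gamma$ having at least four vertices (hence at least two painted edges). For the others I would argue at the level of the nerve: $\gamma$ being $3$-connected forces the associated diagram to be prime and non-splittable, and twist-reducedness can be read off from the fact that $\Gamma$ is simple (no parallel non-painted edges between the same pair of vertices gives no "reducible" twist-region configurations). I expect this hyperbolicity verification — translating the graph-theoretic hypotheses (3) and simplicity precisely into Purcell's diagrammatic conditions on $D(L')$ — to be the main obstacle, since it requires carefully matching the combinatorics of $\gamma$ against each of "non-splittable, prime, twist-reduced." A clean way to sidestep some of this is to cite the nerve characterization directly: Purcell \cite[Lemma 2.3]{Pu4} shows the nerve of a hyperbolic FAL is a triangulation of $\mathbb{S}^2$, and conversely any such triangulation (with the bigon-pairing data) arises this way, so the proof reduces to the duality bookkeeping plus the vertex-count bound. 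I would present the argument in that order: (forward) cite Lemmas \ref{lem:crushprops}, \ref{lem:simplecriteria}; (converse) build $\gamma$ as the dual, verify it is a triangulation, reconstruct $D(L)$, and check hyperbolicity via Purcell's criterion using $3$-connectedness and simplicity of $\Gamma$.
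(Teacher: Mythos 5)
Your forward direction and the bulk of your converse are exactly the paper's argument: dualize $\Gamma$ to $\gamma$, use simplicity of $\Gamma$ (Lemma \ref{lem:simplecriteria}) and $3$-connectedness (Whitney, plus the fact that the dual of a polyhedral graph is polyhedral) to see that $\gamma$ is a simple triangulation of $\mathbb{S}^2$ in which cubicity of $\Gamma$ gives triangular faces and simplicity of $\Gamma$ rules out two faces sharing more than one edge, and use the perfect-matching hypothesis to see that each triangle of $\gamma$ meets exactly one painted edge. Where you diverge is the last step. Your primary plan --- reconstruct $D(L)$ and then verify Purcell's Theorem 2.5 conditions (non-splittable, prime, twist-reduced) directly from $3$-connectedness and simplicity of $\Gamma$ --- is left as an assertion (``$\gamma$ being $3$-connected forces the diagram to be prime and non-splittable,'' ``twist-reducedness can be read off from simplicity''), and that translation is precisely the nontrivial content; as written it is a genuine gap, not a routine check. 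The sidestep you mention is the right fix and is what the paper actually does, but your citation is off: Purcell's Lemma 2.3 only says the nerve of a hyperbolic FAL is a triangulation of $\mathbb{S}^2$; the converse you need --- a simple triangulation of $\mathbb{S}^2$ with each triangle meeting exactly one painted edge \emph{is} the painted nerve of a hyperbolic flat FAL --- is Purcell's Lemma 2.4, which is the lemma the paper invokes. So your proposal is essentially the paper's proof provided you replace the sketched diagrammatic verification by an appeal to \cite[Lemma 2.4]{Pu4} rather than asserting an unproved converse of Lemma 2.3.
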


\begin{proof}
Lemma \ref{lem:crushprops} takes care of the forward direction. For the backwards direction, suppose $\Gamma$ is a planar, cubic, $3$-connected graph with at least four vertices and where  the set of painted edges is a perfect matching of $\Gamma$. Let $\gamma$ be the dual graph to $\Gamma$. Our goal is to employ Lemma 2.4 from \cite{Pu4} and show that $\gamma$  is a simple graph, $\gamma$ is a triangulation of $\mathbb{S}^{2}$, and each triangle of $\gamma$ meets exactly one painted edge, which then implies that $\gamma$ a painted nerve for a hyperbolic flat FAL. Since $\Gamma$ is $3$-connected, it has a unique embedding in $\mathbb{R}^{2}$ up to isotopy \cite{WH1932}, and so,  $\Gamma$ and $\gamma$ are unique duals to one another. Lemma \ref{lem:simplecriteria} shows that $\Gamma$ is simple, making $\Gamma$ a polyhedral graph (simple, $3$-connected, and planar). Since the dual of a polyhedral graph is also a polyhedral graph, we conclude that $\gamma$ is simple, $3$-connected, and planar.

We now justify that $\gamma$ is a triangulation of $\mathbb{S}^{2}$. The previous paragraph shows that $\gamma$ is simple and planar. Since $\Gamma$ is cubic, we can conclude that each face of our planar embedding of $\gamma$ borders exactly three edges. Since $\Gamma$ is simple,  distinct faces in the planar embedding of $\gamma$ share at most one edge. Otherwise, if a two distinct faces shared at least two edges on their border in the planar embedding of $\gamma$, then in the dual $\Gamma$, there would be a multi-edge, violating $\Gamma$ being simple.

Finally, since the set of painted edges is a perfect matching of $\Gamma$, we see that each vertex of $\Gamma$ is incident to exactly one painted edge.  Thus, each triangle of the dual graph $\gamma$  intersects exactly one painted edge of $\Gamma$ in the interior of an edge of $\gamma$, as needed.  
\end{proof}

 Recall that we assumed any flat FAL under consideration is hyperbolic, and so, any corresponding crushtacean will have the properties highlighted in Lemma \ref{lem:simplecriteria} and Proposition \ref{prop:crushchar}.


\section{Symmetries of flat FALs via automorphisms of crushtacean graphs}
\label{sec:symFALviaautocrush}

Belleman \cite{Belleman2023} noted that any element of $Aut_{p}(\Gamma)$ induces an element of $Sym^{+}(\Sth, L)$ via explicitly constructing $L$ within a small neighborhood of a polyhedron in $\mathbb{R}^{3}$ whose $1$-skeleton is isomorphic to $\Gamma$.  Her work is unpublished and only provides a sketch of these results. Here, we construct a detailed proof that  provides more explicit details of the relation between $Aut_{p}(\Gamma)$ and $Sym^{+}(\Sth, L)$.

We now need to introduce an important class of totally geodesic surfaces contained in every flat FAL complement.  Given a fixed flat FAL diagram $D(L)$, each crossing circle $C_i$ bounds at least one twice-punctured disk $D_i$ in the complement $\Sth \setminus L$ that is punctured by some knot circles. In $\Sth \setminus L$, any such \textbf{crossing disk} is a totally geodesic thrice-punctured sphere; see \cite[Lemma 2.1]{Pu4}. It is possible for a single crossing circle to admit multiple distinct crossing disks, and this feature is analyzed in \cite{MRSTZ2025} in the context of belted-sum decompositions of FALs.

Suppose $L$ is a flat FAL with a fixed choice of crossing circles and knot circles coming from a fixed flat FAL diagram. As noted in the introduciton, we say that a symmetry of $L$ is type-preserving if this symmetry maps the set of crossing circles to the set of crossing circles, and thus, also maps the set of knot circles to the set of knot circles. 

\begin{named}{Theorem \ref{thm:main1}}
		\label{thm:Belleman}
	Let $L$ be a flat FAL with a painted crushtacean $\Gamma$. Then there exists a  monomorphism $\phi: Aut_{p}(\Gamma) \rightarrow Sym^{+}(\Sth, L)$. Furthermore, the image of $\phi$ only contains type-preserving symmetries of $L$.
\end{named}

\begin{proof}
		Let $L$ be a flat FAL with a painted crushtacean $\Gamma$. By Proposition \ref{prop:crushchar}, the graph $\Gamma$ is planar and $3$-connected. Steinitz's Theorem \cite{St1928} and the proof of this theorem show that $\Gamma$ can be realized as the $1$-skeleton, $P_1$, of a convex polyhedron $P$ in $\mathbb{R}^{3}$ where the edges of $P_1$ are all tangent to the unit sphere $\mathbb{S}^{2}$ in $\mathbb{R}^{3}$; see Theorem 2.8.11 in \cite{MoTh2001}. Mani \cite{Mani1971} provides an extension of Steinitz's Theorem which shows that there exists an embedding of $\Gamma$ on $P$ in $\mathbb{R}^{3}$ where the group $Aut(\Gamma)$ is isomorphic to $Isom(\mathbb{R}^{3}, P)$, the set of isometries of $\mathbb{R}^{3}$ that preserve $P$ (see Figure \ref{fig:Thm1.1}$(a)$). Since edges of $P_1$ are tangent to $\mathbb{S}^2$, every element of $Isom(\mathbb{R}^{3}, P)$ preserves $\mathbb{S}^2$ as well as the origin.  For the purpose of relating $Aut_{p}(\Gamma)$ to $Sym^{+}(\Sth, L)$, it will be convenient to project $P$ radially onto the unit sphere $\mathbb{S}^2$ (see Figure \ref{fig:Thm1.1}$(b)$).  Since elements of $Isom(\mathbb{R}^{3}, P)$ preserve the origin, and hence radial projection, they also preserve the projected $P$.  Thus $Isom(\mathbb{R}^{3}, P)$ is the group of isometries of $\mathbb{R}^{3}$ that preserve $\mathbb{S}^{2}$ along with this projected $1$-skeleton on $\mathbb{S}^{2}$. Hence we still have an isomorphism between $Aut(\Gamma)$ and isometries of $\mathbb{R}^{3}$ that preserve the projected $P$. We abuse notation and use $P$ to now represent $\mathbb{S}^{2}$ with its projected $1$-skeleton, $P_1$, which is (graph) isomorphic to $\Gamma$. In addition, the painting on $\Gamma$ induces a painting on certain edges of $P_1$ such that $Aut_{p}(\Gamma)$ coincides with $Isom_{p}(\mathbb{R}^{3}, P)$, the group of isometries of $P$ in $\mathbb{R}^{3}$ that preserve it's $1$-skeleton and this painting. Moving forward, we just need to establish a monomorphism $\phi: Isom_{p}(\mathbb{R}^{3}, P) \rightarrow Sym^{+}(\Sth, L)$.

	We now describe how to construct $L$ within a small neighborhood of $P$ in $\mathbb{R}^{3}$ so that $L$ is fixed setwise for every $f \in Isom_{p}(\mathbb{R}^{3}, P)$.  We begin by describing the crossing circles of $L$, which will link the painted edges of the projected $P_1$.  For each painted edge $e$ of $P_1$, let $v$ be the point where the original edge $e$ was tangent to $\mathbb{S}^2$, and let $\mathbb{P}$ be the plane through $v$ and orthogonal to $e$.  Let $U_e$ be the (geometric) circle in $\mathbb{P}$ defined by the following three properties:  it must be centered on the ray from the origin through $v$, for each $e$ its radius must be the same sufficiently small fixed $\delta$, and finally $U_e$ must be orthogonal to $\mathbb{S}^2$. The set $\{U_e\}$, taken over all painted edges of $P_1$, will be the crossing circles of $L$. Note that elements of $Isom_p(\mathbb{R}^{3}, P)$ permute the $U_e$ since they permute their corresponding $v$'s and preserve directions and distances.  
	
	The knot circles of $L$ are constructed on $\mathbb{S}^2$, and consist of arcs parallel to painted edges of $P_1$ together with the portions of unpainted edges joining their endpoints.  More precisely, for each painted edge $e$ and sufficiently small $\epsilon$, let $N_{\epsilon}(e)$ be an $\epsilon$-neighborhood of $e$ on $\mathbb{S}^2$.  Two edges $e_1, e_2$ of $\partial N_{\epsilon}(e)$ run ``parallel" to $e$, each intersecting the two unpainted edges on their respective sides of $e$. The knot circles of $L$ are comprised of the edges $e_i$, together with the portions of unpainted edges connecting them (see Figure \ref{fig:Thm1.1}$(c)$).   Since elements of $Isom_p(\mathbb{R}^{3}, P)$ preserve painted (and, therefore, unpainted) edges, as well as distances, they preserve the knot circles of $L$ as well.  Thus, every element of $Isom_p(\mathbb{R}^{3}, P)$ is a type-preserving isometry of $\mathbb{R}^{3}$ that maps $L$ to $L$. Before moving on, note that $L$ is a fixed set under inversion in $\mathbb{S}^2$ as well, since knot circles are fixed point-wise and crossing circles are orthogonal to the unit sphere.

\begin{figure}[ht]
\[\begin{array}{ccc}
\includegraphics{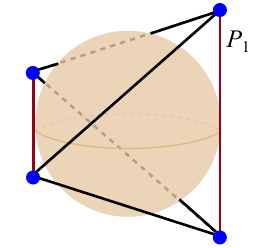} & \includegraphics{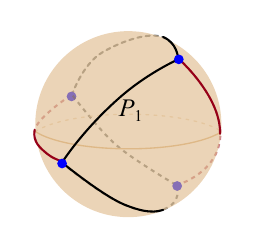} & \includegraphics{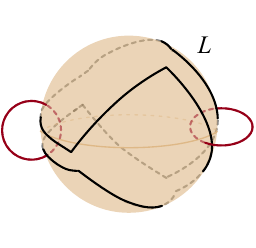}\\
(a)\textrm{ Mani's symmetric }P_1 & (b)\textrm{ } P_1 \textrm{ projected on } \mathbb{S}^2 & (c)\textrm{ Associated FAL }L
\end{array}
\]
	\caption{Geometrically realizing a painted crushtacean $P_1$ and its corresponding flat FAL $L$, so that $Isom_p(\mathbb{R}^3,P_1) \cong Aut_{p}(P_1)$ and each element of $Isom_p(\mathbb{R}^3,P_1)$ induces a symmetry of $L$.}
	\label{fig:Thm1.1}
\end{figure}

	 We now define the map $\phi: Isom_{p}(\mathbb{R}^{3}, P) \rightarrow Sym^{+}(\Sth, L)$. Given  $f \in Isom_{p}(\mathbb{R}^{3}, P)$, let $f^{\ast}:\mathbb{S}^3\to\mathbb{S}^3 $ be the extension of $f$ from $\mathbb{R}^3$ to $\mathbb{S}^3 = \mathbb{R}^{3} \cup \{ \infty\}$ that fixes $\infty$ and agrees with $f$ on $\mathbb{R}^3$. Then $f^{\ast}$ represents a  symmetry of $(\Sth, L)$ that is type-preserving relative to the flat FAL diagram where $P$ is the projection plane. With this in mind, we define $\phi$ as follows. If $f \in Isom_{p}(\mathbb{R}^{3}, P)$ is orientation-preserving, define $\phi(f) = f^{\ast}$. If $f \in Isom_{p}(\mathbb{R}^{3}, P)$ is orientation-reversing, define $\phi(f) \in Sym^+(\mathbb{S}^3,L)$ as the composition $r \circ f^{\ast}$, where $r$ is inversion in the unit $2$-sphere $P$. Since the composition of two orientation-reversing maps is orientation-preserving, we have that $\phi(f) \in Sym^{+}(\Sth, L)$ for this case. In addition, since $f$ and $r$ are both type-preserving on $L$, we also see that $f^{\ast}$ will be type-preserving. 
	 
	 We now show that the map $\phi: Isom_{p}(\mathbb{R}^{3}, P) \rightarrow Sym^{+}(\Sth, L)$ is a well-defined homomorphism. First, we claim that any extension $f^{\ast}$ commutes with $r$. Assuming this commutativity, the reader can easily verify that $\phi( f \circ g) = \phi(f) \circ \phi(g)$ for all possible cases. To prove our claim, let $x\in\mathbb{R}^3$ be any point other than the origin $\textbf{o}$.  Then its reflection across the unit sphere, $r(x)$, is the unique point on ray $\overrightarrow{\textbf{o}x}$ satisfying $\|r(x)\|\cdot\|x\| = 1$.  Since $f^{\ast}$ fixes the origin while preserving lines and distances, we see that $f^{\ast}(x)$ and $f^{\ast}\left(r({x})\right)$ satisfy the same properties, so that $f^{\ast}\left(r(x)\right) =  r \left(f^{\ast}(x)\right)$ for all points of $\mathbb{R}^3$ other than the origin.  Now $r$ always swaps the points $\textbf{0}$ and $\infty$ while $f^{\ast}$ fixes them, so composing them in either order always swaps $\textbf{0}$ and $\infty$. Thus $f^{\ast} \circ r$ and $r \circ f^{\ast}$ agree on $\mathbb{R}^3\cup\infty$, finishing the proof that $\phi$ is a homomorphism.
	 	 
	  To show $\phi$ is one-to-one, suppose $f \in Isom_{p}(\mathbb{R}^{3}, P)$ such that $\phi(f) = I$. Since $\phi(f) = f^{\ast}$ or $\phi(f) = r \circ f^{\ast}$, and $r$ acts as the identity on $P$, we see that $f^{\ast}$ must act the identity on $P$. However, if $f^{\ast}$ acts as the identity on $P$, then the same holds for $f$. Thus, $ker(\phi)$ is trivial and $\phi$ is one-to-one. 
	\end{proof}

Combining Theorem \ref{thm:main1} with previous work from the authors provides several useful results about $Sym^{+}(\Sth, L)$ and $Sym^{+}(\Sth \setminus L)$, which we now highlight.

Let $L$ be a flat FAL with reflection surface $R$ and fix a  crossing disk $D_i$ for each crossing circle $C_i$.  Further, let $u_i,v_i$ denote the points where $D_i$ intersects the knot circles of $L$, and let $e_i$ denote the edge of intersection in $D_i\cap R$ with endpoints $u_i,v_i$.  The \textbf{pre-crushtacean} $\Gamma'$ determined by the choice $D_i$ is the graph with vertices $u_i,v_i$ and edges given by the $e_i$ together with arcs of knot circles joining vertices.

Contracting the edges $e_i$ in $\Gamma'$, then splitting the vertices $u_i$ results in the crushtacean $\Gamma$ determined by the $D_i$.

\begin{lemma}
\label{lem:induceAuto}
	Let $L$ be a flat FAL whose complement admits a unique reflection surface $R\subset \Sth \setminus L$.  Choose one crossing disk $D_i$ for each crossing circle $C_i$ of $L$, and denote the resulting pre-crushtacean and crushtacean by $\Gamma'$ and $\Gamma$, respectively. If $f\in Sym^+(\Sth, L)$ permutes crossing disks, then $f$ induces an automorphism of $\Gamma$. In particular, if $L$ is $b$-prime and $f\in Sym^+(\Sth, L)$ permutes crossing circles, then $f$ induces an automorphism of $\Gamma$.
\end{lemma}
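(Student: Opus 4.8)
The plan is to leverage the uniqueness of the reflection surface $R$ together with Mostow--Prasad rigidity to show that $f$, after suitable normalization, respects all the combinatorial data encoded in the pair $(\Gamma', R)$, and hence descends to the crushtacean $\Gamma$. First I would observe that since $f \in Sym^+(\Sth, L)$, Mostow--Prasad rigidity lets us realize $f$ as an isometry of the hyperbolic structure on $\Sth \setminus L$. Because the reflection surface $R$ is the fixed-point set of the isometric involution corresponding to reflection in the projection plane, and this reflection surface is \emph{unique}, any isometry of $\Sth \setminus L$ must conjugate the reflection involution to itself, and therefore must preserve $R$ setwise. This is the step where the hypothesis is really used.

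Next I would use the hypothesis that $f$ permutes the chosen crossing disks $\{D_i\}$. Each crossing disk $D_i$ is a totally geodesic thrice-punctured sphere, so $f$ restricts to an isometry carrying $D_i$ to some $D_{\sigma(i)}$ for a permutation $\sigma$. Since $f$ also preserves $R$, it must carry the edge of intersection $e_i = D_i \cap R$ to $e_{\sigma(i)} = D_{\sigma(i)} \cap R$, and in particular carry the vertex set $\{u_i, v_i\}$ to $\{u_{\sigma(i)}, v_{\sigma(i)}\}$ (possibly swapping $u$ and $v$). At the same time, $f$ preserves the knot circles of $L$ (it need not be type-preserving a priori, but since it permutes crossing disks it permutes crossing circles, hence permutes knot circles), and $f$ preserves $R$, so $f$ carries the arcs $R \cap (\text{knot circles})$ among themselves. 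These arcs are exactly the non-$e_i$ edges of the pre-crushtacean $\Gamma'$. Therefore $f$ induces a graph automorphism of $\Gamma'$: it permutes vertices $\{u_i, v_i\}$ consistently with a permutation of the edges $e_i$ and of the knot-circle arcs, preserving incidence.

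Finally I would pass from $\Gamma'$ to $\Gamma$. The crushtacean $\Gamma$ is obtained from $\Gamma'$ by contracting each $e_i$ and then splitting the resulting vertex; this operation is natural, depending only on which edges are the $e_i$'s (equivalently, which edges come from crossing disks as opposed to knot-circle arcs). Since the automorphism of $\Gamma'$ induced by $f$ preserves the distinguished set $\{e_i\}$, it commutes with the contract-then-split operation and descends to an automorphism of $\Gamma$. (One should check that the ``split'' is done consistently, i.e.\ that $f$ acting on $\Gamma'$ sends a splitting of $u_i$ to the corresponding splitting of $u_{\sigma(i)}$; this follows because the splitting is determined by the cyclic/planar structure at the vertex, which $f$ preserves as an isometry respecting the underlying polyhedral decomposition.) For the last sentence of the lemma: if $L$ is $b$-prime, then by Theorem~5.2 of \cite{MRSTZ2025} each crossing circle bounds a unique crossing disk, so if $f$ permutes crossing circles it automatically permutes the (uniquely determined) crossing disks, and the previous argument applies.

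\textbf{Main obstacle.} The delicate point is establishing that $f$ genuinely preserves $R$ and the crossing disks as \emph{geometric} objects, not merely up to isotopy --- this is where rigidity plus the uniqueness hypothesis are essential --- and then tracking that the induced map is well-defined on $\Gamma$ rather than just on $\Gamma'$, i.e.\ that it is compatible with the vertex-splitting. Verifying the compatibility of $f$ with the splitting requires being careful about the planar/polyhedral structure at each vertex; I expect this bookkeeping, rather than any conceptual difficulty, to be the part that needs the most care.
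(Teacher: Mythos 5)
Your proposal is correct and follows essentially the same route as the paper: uniqueness of the reflection surface forces the induced isometry to preserve $R$ (the paper outsources this to Corollary~3.13 of \cite{MiTr2023} rather than arguing via conjugation of the reflection involution), then preservation of the crossing disks, knot circles, and $R$ gives $f(\Gamma')=\Gamma'$, which descends through the contract-and-split construction to an automorphism of $\Gamma$, with the $b$-prime case handled by uniqueness of crossing disks from \cite{MRSTZ2025} (the paper cites its Corollary~5.3 for this). Your extra care about compatibility with the vertex-splitting is a finer level of detail than the paper records, not a different argument.
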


\begin{proof}
	Suppose $f\in Sym^+(\Sth, L)$ permutes the chosen set of crossing disks $\{D_i\}$, and so, $f$ also permutes the set of boundary crossing circles $\{C_i\}$ as well.  Let $\rho_f: \Sth \setminus L \to \Sth \setminus L$ be the hyperbolic isometry induced by $f$.  Since $L$ has a unique reflection surface, Corollary 3.13 of \cite{MiTr2023} implies $\rho_f(R) = R$.  Moreover, since $f$ permutes crossing circles, it must also permute knot circles of $L$. 
	
	Now let $\Gamma'$ denote the pre-crushtacean determined by the choice of crossing disks $\{D_i\}$.  Since $f$ permutes knot circles as well as this set of crossing disks, we have that $f(\Gamma') = \Gamma'$.  This implies that $f(\Gamma) = \Gamma$ and, since $f$ clearly induces an isomorphism between $\Gamma$ and $f(\Gamma)$, it induces an automorphism of $\Gamma$.
	
	If we suppose $L$ is $b$-prime, then Corollary 5.3 of \cite{MRSTZ2025} implies each crossing circle bounds a unique crossing disk.  Thus, preserving crossing circles implies preserving the crossing disks in this case.
\end{proof}

\begin{cor}
	\label{cor:bprimeunique}
	Let $L$ be a $b$-prime flat FAL whose complement admits a unique reflection surface. Then there exists a unique crushtacean for $L$, up isotopy in $\mathbb{R}^{2}$.
\end{cor}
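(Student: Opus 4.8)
The plan is to derive Corollary~\ref{cor:bprimeunique} as a fairly direct consequence of Lemma~\ref{lem:induceAuto} together with the structural facts about crushtaceans already established. The essential point is that, for a $b$-prime flat FAL, the only freedom in building a crushtacean is the choice of crossing disks $D_i$ bounded by the crossing circles $C_i$; once that choice is fixed, the pre-crushtacean $\Gamma'$, and hence the crushtacean $\Gamma$ obtained by contracting the edges $e_i$ and splitting the vertices, is determined. So I would first invoke Corollary~5.3 of \cite{MRSTZ2025} (used already inside the proof of Lemma~\ref{lem:induceAuto}): when $L$ is $b$-prime, each crossing circle $C_i$ bounds a \emph{unique} twice-punctured crossing disk $D_i$ up to isotopy in $\Sth \setminus L$. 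This removes the only source of choice in the construction.

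Next I would argue that a crushtacean, as an abstract planar graph, is recovered intrinsically from $(\Sth, L)$ once the reflection surface is pinned down. Here is where the uniqueness of the reflection surface $R$ enters: by hypothesis $\Sth \setminus L$ admits a unique reflection surface, so $R$ is canonically attached to $L$. Intersecting $R$ with the unique crossing disks $D_i$ produces the edges $e_i$ with endpoints $u_i, v_i$, and intersecting $R$ with the knot circles' parallel copies produces the remaining edges; this is exactly the recipe defining the pre-crushtacean $\Gamma'$ in the text. Since every ingredient ($R$, the $D_i$, the knot circles, the points $u_i,v_i$) is now canonical, $\Gamma'$ is canonical, and therefore so is the crushtacean $\Gamma$ obtained from it by the described contraction/splitting operation. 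Finally, by Whitney's Theorem (Proposition~\ref{prop:crushchar} establishes $\Gamma$ is $3$-connected and planar), this $\Gamma$ has a unique embedding in $\mathbb{R}^2$ up to isotopy, which gives the ``up to isotopy in $\mathbb{R}^2$'' clause in the statement.

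To make the argument airtight I would phrase it as: suppose $\Gamma_1$ and $\Gamma_2$ are two crushtaceans for $L$, arising from flat FAL diagrams $D_1(L)$ and $D_2(L)$. Each $D_j(L)$ determines a projection plane, hence a reflection surface $R_j$; by uniqueness of the reflection surface, $R_1$ and $R_2$ are isotopic in $\Sth \setminus L$, so we may take $R_1 = R_2 = R$. Each $D_j(L)$ also picks out crossing disks, but $b$-primeness forces these to be the unique crossing disks of $L$, so both diagrams select the same (isotopy classes of) disks $D_i$. Then the pre-crushtaceans built from $R$ and the $D_i$ agree, hence the crushtaceans $\Gamma_1, \Gamma_2$ are isomorphic as painted planar graphs; Whitney's Theorem upgrades this to an isotopy of embeddings in $\mathbb{R}^2$.

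The main obstacle I anticipate is justifying cleanly that the combinatorial type of the crushtacean really is determined by the pair $(R, \{D_i\})$ inside $\Sth \setminus L$, independent of how one drew the original diagram --- in particular, matching up the abstract construction ``replace each crossing circle by a painted edge'' from a diagram with the intrinsic ``cut along $R$ and the $D_i$'' description via the pre-crushtacean. This requires knowing that the knot-circle arcs on $R$, together with the edges $e_i$, assemble in only one way once the disks and surface are fixed; I would lean on the discussion preceding Lemma~\ref{lem:induceAuto} (and on Purcell's polyhedral decomposition of FAL complements, which canonically produces $\gamma$ and hence its dual $\Gamma$ from $\Sth \setminus L$ and the choice of crossing disks) rather than re-proving it here. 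Everything else is bookkeeping: uniqueness of $R$ handles the surface, Corollary~5.3 of \cite{MRSTZ2025} handles the disks, and Whitney's Theorem handles the planar embedding.
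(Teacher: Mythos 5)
Your proposal is correct and follows essentially the same route as the paper: uniqueness of the reflection surface fixes the partition into knot and crossing circles, $b$-primeness (via Corollary~5.3 of \cite{MRSTZ2025}) fixes the crossing disks, these data determine the pre-crushtacean and hence the crushtacean, and Whitney's Theorem gives the unique planar embedding. The extra discussion about matching the diagrammatic and intrinsic constructions is more detail than the paper supplies but does not change the argument.
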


\begin{proof}
	Since $L$ admits a unique reflection surface, there is a unique partition of the components of $L$ into crossing circles and knot circles. Since $L$ is $b$-prime, for each crossing circle, there exists a unique crossing disk in $\Sth \setminus L$. The uniqueness of knot circles and crossing disks determines a unique pre-crushtacean for $L$, and so, also determines a unique crushtacean for $L$ (up to graph automorphism). Since a crushtacaean is a planar $3$-connected graph, its embedding in $\mathbb{R}^{2}$ is unique up to isotopy by Whitney's Theorem \cite{WH1932}.
\end{proof}

We now can prove Corollary \ref{cor:main1} from the introduction.

\begin{named}{Corollary \ref{cor:main1}}
	Let $L$ be a $b$-prime flat FAL whose complement admits a unique reflection surface.  Then $Sym^+(\Sth, L) \cong Aut_{p}(\Gamma)$. If in addition $L$ is not a signature link, then $Sym^+(\Sth \setminus L) \cong Sym^+(\Sth, L) \cong Aut_{p}(\Gamma)$.
\end{named}

\begin{proof}
	Let $D(L)$ be the flat FAL diagram that corresponds to the crushtacean $\Gamma$. Theorem \ref{thm:main1} tells us that $\phi: Aut_{p}(\Gamma) \rightarrow Sym^{+}(\Sth, L)$ is a monomorphism. Since $L$ is $b$-prime and $\Sth \setminus L$ has a unique reflection surface, Lemma \ref{lem:induceAuto} shows that any type-preserving symmetry of $L$ is induced by an element of $Aut_{p}(\Gamma)$. Thus, it remains to show that $Sym^{+}(\Sth, L)$ does not contain any type-changing symmetries. If $Sym^{+}(\Sth, L)$ did contain a type-changing symmetry, then there would exist a flat FAL diagram for $L$ with a different partition of the components of $L$ into knot circles and crossing circles than that of $D(L)$. However, this would imply that $\Sth \setminus L$ contains two distinct reflection surfaces, which is a contradiction. 
	
	If in addition $L$ is not a signature link, then Theorem 1.3 of \cite{MiTr2023} implies $Sym^+(\Sth \setminus L)\cong Sym^+(\Sth, L)$ under our hypotheses.
\end{proof}

Both hypotheses in Corollary \ref{cor:main1} are necessities for its conclusion to hold. For instance, in Section \ref{sec:MultRef}, we  examine an infinite class of flat FALs $\{P_n\}$ where each $P_n$ is b-prime, but each $\Sth \setminus P_n$ admits multiple reflection surfaces and $[Sym^{+}(\Sth, P_n) : \phi(Aut_{p}(\Gamma(P_n)))] =2$. In that section, we also examine an infinite class of flat FALs $\{O_n\}$ where each $O_n$ is not b-prime, each $\Sth \setminus O_n$ admits multiple reflection surfaces, and $[Sym^{+}(\Sth, O_n) : \phi(Aut_{p}(\Gamma(O_n)))] =2n$. Next, in Figure \ref{fig:bComposite} we give an example of a b-composite FAL whose complement admits a unique reflection surface but $Sym^+(\Sth, L) \not\cong Aut_{p}(\Gamma)$. In particular, the reader can easily convince themselves that $Aut_{p}(\Gamma)\cong \mathbb{Z}_2$, while SnapPy calculates that $Sym(\Sth, L) \cong D_4 \times \mathbb{Z}_2$, where $D_4$ is the dihedral group of order eight.  Furthermore, the work of \cite[Theorem 1.3]{MiTr2023} shows that if $L$ is a signature link, that $\Sth \setminus L$ does admit symmetries (full-swaps) that do not correspond with symmetries of $L$. Thus, the hypothesis that $L$ is not a signature link is a necessity for $Sym^{+}(\Sth \setminus L) \cong Sym^{+}(\Sth, L)$.

\begin{figure}[ht]
	\centering
	\begin{overpic}{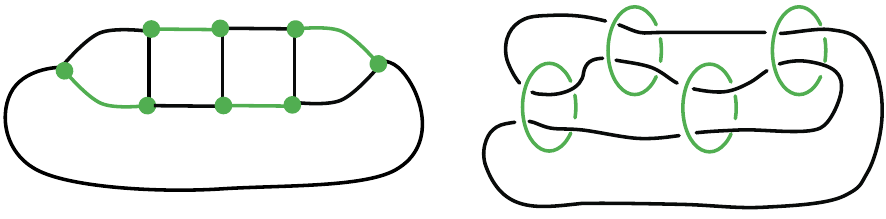}
		\put(5,20){$\Gamma$}
		\put(54,20){$L$}
	\end{overpic}

	\caption{A painted crushtacean $\Gamma$ and its $b$-composite flat FAL $L$ for which $Sym^+(\Sth, L) \not\cong Aut_{p}(\Gamma)$.}
	\label{fig:bComposite}
\end{figure}


\section{Symmetry Groups of $b$-prime flat FALs with a unique reflection surface}
\label{sec:classifying}

The previous section motivates a classification of automorphism groups of painted crushtaceans as an important tool for studying symmetry groups of flat FALs. To start, we will take advantage of some important properties of $3$-connected planar graphs. The following theorem was noted in \cite[Section 6]{KlNeZe2022}, and is an application of Whitney's Theorem \cite{WH1932} and Mani's Theorem \cite{Mani1971}. Recall that $O(3)$ designates the orthogonal group of dimension $3$, which is precisely the group of isometries of the $2$-sphere. 

\begin{thm}
	\label{thm:O(3)isometries}
	Let $X$ be a $3$-connected planar graph. Then $Aut(X)$ is isomorphic to a finite subgroup of isometries of $\mathbb{S}^{2}$, i.e., $Aut(X) \cong G$ where $G \leq O(3)$ and $|G| < \infty$. 
\end{thm}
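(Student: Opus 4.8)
The plan is to invoke the two classical results the statement is built on, namely Whitney's Theorem \cite{WH1932} and Mani's Theorem \cite{Mani1971}, and assemble them into the desired conclusion. First I would recall that since $X$ is $3$-connected and planar, Whitney's Theorem guarantees that $X$ has an essentially unique embedding in $\mathbb{S}^{2}$: any two embeddings differ by a homeomorphism of the sphere. Consequently every graph automorphism of $X$ must permute the faces of this embedding consistently, i.e., it extends to a self-homeomorphism of $\mathbb{S}^{2}$, giving a homomorphism $Aut(X) \to \mathrm{Homeo}(\mathbb{S}^{2})$. Then I would upgrade this topological action to an isometric one: Mani's Theorem says the unique embedding can be realized geometrically on $\mathbb{S}^{2}$ (as the projected $1$-skeleton of a suitable convex polytope, exactly as used in the proof of Theorem \ref{thm:main1}) so that every automorphism of $X$ is realized by an isometry of $\mathbb{S}^{2}$, i.e.\ by an element of $O(3)$.

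The key steps, in order, are: (1) use $3$-connectedness and Whitney's Theorem to fix the embedding of $X$ on $\mathbb{S}^{2}$ up to homeomorphism, so that $Aut(X)$ acts faithfully on the embedded graph; (2) apply Mani's Theorem to replace this embedding with a metrically symmetric one on the round sphere for which the induced action of $Aut(X)$ is by isometries, yielding an injective homomorphism $\psi : Aut(X) \hookrightarrow O(3)$ (injectivity is immediate, since an isometry fixing the embedded $1$-skeleton pointwise is the identity, and a nontrivial automorphism moves some vertex); and (3) observe that $Aut(X)$ is finite, since an automorphism is determined by its action on the finite vertex set, so $\psi(Aut(X))$ is a finite subgroup $G \leq O(3)$ with $Aut(X) \cong G$. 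Identifying $O(3)$ with the full isometry group of the round $\mathbb{S}^{2}$, as noted just before the statement, then gives the claimed form.

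I do not expect a genuine obstacle here, as the proof is essentially a citation of Whitney's and Mani's theorems packaged together; the only point requiring a little care is checking that the homomorphism into $O(3)$ is injective rather than merely well-defined, which follows because the $1$-skeleton on $\mathbb{S}^{2}$ is a rigid configuration determining the isometry and a nontrivial graph automorphism must displace a vertex. It is also worth remarking that the bulk of this argument has already been carried out inside the proof of Theorem \ref{thm:main1} above (the passage through Steinitz's and Mani's theorems, radial projection to $\mathbb{S}^{2}$, and the identification of automorphisms with isometries preserving the projected skeleton), so the proof can legitimately be shortened to ``this is the content of the first paragraph of the proof of Theorem \ref{thm:main1}, forgetting the painting.''
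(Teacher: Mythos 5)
Your proposal is correct and matches the paper's treatment: the paper proves this statement exactly by citing Whitney's Theorem \cite{WH1932} and Mani's Theorem \cite{Mani1971} (as noted in \cite{KlNeZe2022}), with the geometric realization and radial projection argument already spelled out in the unpainted part of the proof of Theorem \ref{thm:main1}. Your added remarks on injectivity and finiteness are fine and consistent with that argument.
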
 

Corollary \ref{cor:main1} tells us that if $L$ is a $b$-prime flat FAL whose complement admits a unique reflection surface, then $Sym^{+}(\Sth, L) \cong Aut_{p}(\Gamma(L))$. Under these hypotheses,  Theorem \ref{thm:O(3)isometries} implies that $Sym^{+}(\Sth, L)$ is isomorphic to a finite subgroup of $O(3)$. This motivates the main goal for the rest of this section: show that for every finite subgroup $G$ of $O(3)$, there exists a $b$-prime flat FAL $L$ such that $Sym^{+}(\Sth, L) \cong G$. First, we review the classification of finite subgroups of $O(3)$, following the approach given in \cite{Senechal1990}. 


\subsection{Finite subgroups of $O(3)$}
\label{subsec:finitesubO3}

Recall that $SO(3)$ is the special orthogonal group of dimension $3$, which can also be described as the group of orientation-preserving isometries of the $2$-sphere. All elements of $SO(3)$ can be described as rotations about an axis through the center of the $2$-sphere. We immediately have the group decomposition $O(3) = SO(3) \times \{ \pm I \}$.  Here, we use $D_n$ to denote a dihedral group of order $2n$ for $n \geq 3$, $A_n$ to denote the alternating group on $n$ letters, and $S_n$ to denote the symmetric group on $n$ letters. 

If $G \leq SO(3)$ is finite, then most cases are covered by the following five possibilities which are highlighted as a pair corresponding to an abstract group and the polyhedron whose group of orientation-preserving symmetries is isomorphic to that group:
\begin{enumerate}
	\item ($\mathbb{Z}_n$, $n$-gonal pyramid with $n \geq 4$)
	\item ($D_n$, $n$-gonal prism or $n$-gonal antiprism with $n \geq 3$ and $n \neq 4$)
	\item ($A_4$, regular tetrahedron)
	\item ($S_4$, cube)
	\item ($A_5$, dodecahedron)
\end{enumerate}

In addition, we could also have the trivial group, $\mathbb{Z}_2$, $\mathbb{Z}_3$, and $D_4$. 

Now suppose $G \leq O(3)$ is finite where $G$ contains an orientation-reversing symmetry.  These possibilities are highlighted below as a pair corresponding to an abstract group and the polyhedron whose full group of symmetries is isomorphic to that group (when such a description exists). The pyritohedron is an irregular dodecahedron whose faces are all identical irregular pentagons.  The rhombic disphenoid is an irregular tetrahedron with four identical scalene triangles as faces.

\begin{enumerate}
	\item ($\mathbb{Z}_{2n}$ for $n$ odd and $\mathbb{Z}_{n} \times \mathbb{Z}_{2}$ for $n$ even, -)
	\item ($D_n \times \mathbb{Z}_{2}$ for $n$ even and $D_{2n}$ for $n$ odd, $n$-gonal prism)
	\item ($D_n \times \mathbb{Z}_{2}$ for $n$ odd and $D_{2n}$ for $n$ even, $n$-gonal antiprism) 
	\item ($D_{n}$, $n$-gonal pyramid)
	\item ($A_4 \times \mathbb{Z}_2$, pyritohedron)
	\item ($S_4 \times \mathbb{Z}_2$, cube)
	\item ($A_5 \times \mathbb{Z}_2$, dodecahedron)
	\item ($\mathbb{Z}_{2} \times \mathbb{Z}_{2}$, rhombic disphenoid)
	\item ($S_4 = A_4 \rtimes \mathbb{Z}_2$, tetrahedron)
\end{enumerate}


\subsection{Building crushtaceans with prescribed automorphism groups}
\label{subsec:buildingcrush}

Our goal is to show that given any $G \leq O(3)$ with $|G|< \infty$, there exists a painted crushtacean $\Gamma$ such that $Aut_{p}(\Gamma) \cong G$. Furthermore, we will build these painted crushtaceans so that their corresponding flat FALs are always $b$-prime and their corresponding  complements have a unique reflection surface. 

To start, we prove a useful lemma that shows how to build a painted crushtacean for a $b$-prime flat FAL with a prescribed group of automorphisms. Let $\Gamma'$ be a connected, simple graph. A \textbf{cycle replacement} on a vertex $v \in V(\Gamma')$ with $val(v)=n \geq 3$ is the process of replacing $v$ with an $n$-cycle. Now suppose for all $v \in V(\Gamma')$, we have $val(v) \geq 3$. Then a \textbf{cycle expansion} of $\Gamma'$ is the graph $\Gamma$ resulting from performing a cycle replacement on every vertex of $\Gamma'$. See Figure \ref{fig:CycleExpansion} for an example of a cycle expansion of a graph.

\begin{figure}[ht]
	\centering
	\begin{overpic}[scale=0.70]{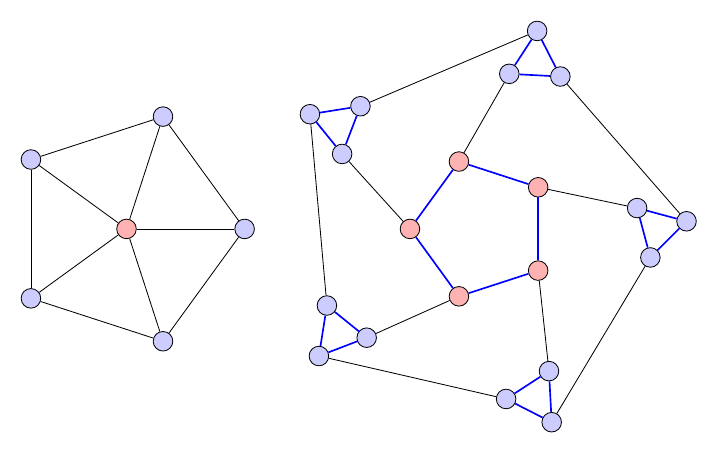}
		\put(13,50){$\Gamma' = W_5$}
		\put(60,58){$\Gamma$}
	\end{overpic}
	\caption{The wheel graph $\Gamma' = W_5$ is depicted on the left. The cycle expansion of $\Gamma'$, labeled as $\Gamma$, is depicted on the right with original edges in black and new edges in blue.}
	\label{fig:CycleExpansion}
\end{figure}

For our purposes, the graph $\Gamma'$ will be simple, planar, and $3$-connected.  A cycle replacement on a vertex of a planar graph can be achieved in the plane, so the cycle expansion $\Gamma$ of $\Gamma'$ is planar as well.  We introduce some terminology and make some observations relating the planar embeddings of $\Gamma$ and $\Gamma'$.  It makes sense to say ``the" planar embeddings since we will see, in Lemma \ref{lem:CycleExpansion} below, that both $\Gamma$ and $\Gamma'$ have unique embeddings in $\mathbb{S}^2$ up to isotopy.


 	 We call a cycle in $\Gamma$ a \textbf{new $n$-cycle} if it was created via cycle expansion on a vertex of degree $n$ in $\Gamma'$. An edge $e \in E(\Gamma)$ is a \textbf{new} edge if it is an edge in a new $n$-cycle; otherwise, $e$ is an \textbf{original} edge. A region $R$ of the planar embedding of $\Gamma$ is \textbf{new} if its boundary is a new $n$-cycle of $\Gamma$; otherwise, $R$ is referred to as an \textbf{original} region.  The definition of a cycle expansion on $\Gamma'$ induces a fair amount of structure on the adjacency of edges and regions in $\Gamma$.

We discuss the lengths of boundary cycles for both original and new regions. Observe that the boundary of an original region in $\Gamma$ has an even number of at least $6$ edges.  Indeed, the boundary cycle $b$ of an original region $R$ in $\Gamma$ has an even number of edges since it is obtained from the boundary $b'$ of its corresponding region $R'$ in $\Gamma'$ by introducing a new edge between each pair of adjacent edges of $b'$.  This observation also implies that the edges of $b$ alternate between original and new as one traverses the cycle.  Moreover, since $\Gamma'$ is simple, each boundary cycle of $\Gamma'$ has at least $3$ edges.  Hence the boundary of each original region in $\Gamma$ has an even number of at least $6$ edges.  Also note that the length of a boundary cycle for a new region in $\Gamma$ equals the degree of the vertex $v'$ used to create it.

We now discuss the types of regions adjacent to, and connected by both original and new edges in $\Gamma$. Each original edge $e\in E(\Gamma)$ is adjacent to original regions on both sides; whereas, if $e$ is new it bounds a new region on one side and an original one on the other. We say that two regions with disjoint boundaries are \textbf{connected} by the edge $e$ if one endpoint of $e$ lies in each boundary cycle.  With this definition, note that an original edge connects two new regions while a new edge connects two original regions.

\begin{lemma}
	\label{lem:CycleExpansion}
	Let $\Gamma'$ be a simple, planar, $3$-connected graph. Perform a cycle expansion on $\Gamma'$ to obtain $\Gamma$ and paint all the original edges of $\Gamma$. Then $\Gamma$ is a painted crushtacean for a $b$-prime flat FAL and $Aut(\Gamma) = Aut_{p}(\Gamma) \cong Aut(\Gamma')$. 
\end{lemma}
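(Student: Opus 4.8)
The plan is to verify the defining properties of a painted crushtacean for $\Gamma$ using Proposition \ref{prop:crushchar}, then identify the automorphism group, and finally confirm $b$-primeness.

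First I would check the four conditions of Proposition \ref{prop:crushchar}. Planarity of $\Gamma$ is already noted in the text (a cycle replacement is a planar operation). The graph $\Gamma$ is cubic: a vertex of a new $n$-cycle inherits one original edge (the stub of the original edge at $v'$) and lies on two new edges of the cycle, for total degree $3$. For $3$-connectedness, I would argue directly: any $2$-element cut set $S$ of $\Gamma$ would have to be carried by the structure back to a small cut set of $\Gamma'$. Concretely, if $S$ contains a new edge's two endpoints or separates a single new $n$-cycle, one checks using the adjacency observations (an original edge connects two new regions, etc.) that the pieces are not genuinely disconnected because $\Gamma'$ being $3$-connected forces enough cross-connections; I would phrase this by contracting all new cycles back to vertices, which sends a hypothetical $2$-cut of $\Gamma$ to a $\leq 2$-cut of $\Gamma'$, a contradiction. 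Finally, the painted edges are exactly the original edges, and since every vertex of $\Gamma$ lies on exactly one original edge (the unique stub inherited from its generating vertex $v'$) and on two new edges, the painted edges form a perfect matching. By Proposition \ref{prop:crushchar}, $\Gamma$ is a painted crushtacean for some hyperbolic flat FAL.

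Next I would establish $Aut(\Gamma) = Aut_p(\Gamma) \cong Aut(\Gamma')$. The equality $Aut(\Gamma) = Aut_p(\Gamma)$ follows because the painted (original) edges are characterized purely combinatorially — for instance, as the edges lying on no new $n$-cycle, or equivalently, as the edges each of whose endpoints lies on exactly two other edges that belong to a common cycle bounding a new region. Hence any graph automorphism of $\Gamma$ automatically preserves the set of painted edges. For the isomorphism with $Aut(\Gamma')$: there is a natural homomorphism $Aut(\Gamma') \to Aut(\Gamma)$, since an automorphism of $\Gamma'$ permutes vertices and edges and therefore induces a permutation of the new cycles and original edges of $\Gamma$ respecting all incidences. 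Conversely, given $\psi \in Aut(\Gamma)$, the contraction map $c\colon \Gamma \to \Gamma'$ collapsing each new $n$-cycle to a vertex is canonical (new cycles are combinatorially recognizable), so $\psi$ descends to an automorphism $\bar\psi$ of $\Gamma'$; these two assignments are mutually inverse. Both $\Gamma$ and $\Gamma'$ having unique planar embeddings up to isotopy (Whitney's Theorem \cite{WH1932}, since both are $3$-connected) is what makes the notion of ``new region'' and hence ``new cycle'' well-defined intrinsically, which is the point cited in the paragraph preceding the lemma.

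Finally, for $b$-primeness I would invoke Theorem 5.2 of \cite{MRSTZ2025}, which translates $b$-primeness into a combinatorial condition on the crushtacean. The relevant condition fails exactly when $\Gamma$ contains a certain small configuration (e.g., a triangle or near-triangle structure forcing a separating pair of thrice-punctured spheres, or equivalently $\Gamma$ being the crushtacean of the Borromean rings). Since every original region of $\Gamma$ has boundary length at least $6$ (as established in the text) and every new region has boundary length at least $3$ with its edges all new, one checks that $\Gamma$ avoids the obstructing configuration; in particular $\Gamma$ is not the Borromean crushtacean, which has only four vertices, whereas $\Gamma$ has at least $3 \cdot 4 = 12$ vertices. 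I expect this last step — pinning down precisely which combinatorial condition from \cite{MRSTZ2025} must be checked and verifying cycle expansions always satisfy it — to be the main obstacle, since it requires unpacking the external characterization rather than a self-contained argument; the $3$-connectedness verification is the second most delicate point but is handled cleanly by the contraction argument.
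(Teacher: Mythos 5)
Your skeleton (verify Proposition \ref{prop:crushchar}, identify the automorphism group, then quote Theorem 5.2 of \cite{MRSTZ2025}) matches the paper's, but two of the three steps have genuine gaps. The most serious is the equality $Aut(\Gamma) = Aut_{p}(\Gamma)$: your argument is circular. You characterize the painted edges as ``the edges lying on no new $n$-cycle,'' but ``new cycle'' and ``new region'' are notions defined by the construction of $\Gamma$ from $\Gamma'$, not intrinsic to the abstract graph, so an arbitrary automorphism is not known, a priori, to preserve them. Whitney's Theorem only says that an automorphism permutes the faces of the unique embedding; it does not label faces as new or original, and face data alone does not obviously separate them (a new region coming from a degree-$6$ vertex of $\Gamma'$ is a hexagonal face, exactly like a shortest possible original region). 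The real content of this step --- ruling out an automorphism carrying a new region to an original region --- is precisely what the paper proves: it shows such an automorphism would force \emph{every} new region to map to an original one (propagating along original edges, which connect new regions, and new edges, which connect original regions), whence every face of $\Gamma$ would have boundary length at least $6$; combined with $3v = 2e$ and Euler's formula this gives $0 \ge 2$, a contradiction. Without this counting argument, or some genuinely intrinsic invariant distinguishing the two kinds of faces, your ``purely combinatorial characterization'' of the painted edges assumes the conclusion. (Your contraction construction of $Aut_{p}(\Gamma) \cong Aut(\Gamma')$ is fine, since for painted-edge-preserving automorphisms the new cycles \emph{are} recognizable, as the components of the unpainted subgraph.)

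The $b$-primeness step is also left unproved, and your guess at the relevant criterion is off. Theorem 5.2 of \cite{MRSTZ2025} says the FAL is $b$-prime if and only if every $3$-edge cut of the painted crushtacean is thrice-painted (equivalently, no $3$-edge cut contains exactly one painted edge); it is not a matter of excluding a small forbidden configuration such as a triangle, nor of observing that $\Gamma$ is not the Borromean crushtacean. The paper verifies this with a concrete case analysis: a $3$-edge cut consisting of one original edge and two new edges, whether the two new edges lie on the same new cycle or on different ones, would yield a $2$-vertex cut or an edge cut of $\Gamma'$, contradicting $3$-connectedness of $\Gamma'$. Your remarks about boundary lengths at least $6$ and vertex counts do not address arbitrary $3$-edge cuts, so this part still has to be done. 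Finally, a small repair to your $3$-connectedness sketch: deleting two vertices of $\Gamma$ and then contracting the (remnants of the) new cycles removes at most two \emph{original edges} of $\Gamma'$, not vertices, so what you get is a $\le 2$-edge cut of $\Gamma'$; this still contradicts $3$-connectedness (which forces $3$-edge-connectedness), but the assertion ``a $2$-cut of $\Gamma$ maps to a $\le 2$-cut of $\Gamma'$'' should be stated and justified in that form, including why the two sides of the cut survive the contraction as nonempty pieces.
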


\begin{proof}
	Since $\Gamma'$ is $3$-connected, we have $val(v) \geq 3$ for all $v \in \Gamma'$, and so, a cycle expansion on $\Gamma'$ is well-defined. 
	
	We first show that $\Gamma$ is a painted crushtacean for a flat FAL. By construction, since $\Gamma'$ is simple and planar, the graph $\Gamma$ will also be simple and planar. In addition, each vertex of $\Gamma$ is incident to one original edge and two distinct new edges coming from the same new $n$-cycle. Thus, $\Gamma$ is cubic. Furthermore, since only original edges of $\Gamma$ are painted, we see that each vertex is incident to exactly one painted edge.	To show $\Gamma$ is $3$-connected, suppose not, that is, suppose there exists a pair of vertices $\{v_1, v_2 \}$ in $\Gamma$ such that their removal disconnects $\Gamma$. Note that, every vertex of $\Gamma$ lies on a new $n$-cycle and every such vertex is adjacent to only one original edge; call such an edge $e_i$ for $v_i$.  Let $e_{i}' \in E(\Gamma')$ be the corresponding edge for $\Gamma'$. Now, remove  $\{ v_{1}', v_{2}'\}$ from $\Gamma'$ where $v_{i}'$ is incident to $e_{i}'$  in $\Gamma'$  and $v_{i}'$ was replaced by the $n$-cycle that contains $v_{i}$. This will disconnect $\Gamma'$, which contradicts the assumption that $\Gamma'$ is 3-connected. Thus, $\Gamma$ is a painted crushtacean for a flat FAL $L$. 
	
	Next, we show that every $3$-edge cut of $\Gamma$ is thrice-painted, which in turn justifies that $L$ is $b$-prime by Theorem 5.2 of \cite{MRSTZ2025}. Any $3$-edge cut of $\Gamma$ either includes one painted edge or three painted edges; see the proof  of \cite[Theorem 5.2]{MRSTZ2025}. So, suppose there exists a  $3$-edge cut on  $\Gamma$ that has only one painted edge, i.e., this $3$-edge cut involves two edges, $e_1$ and $e_2$, on new cycles and one original edge $e$, with $e'$ standing for the corresponding edge in $\Gamma'$. First, suppose $e_1$ and $e_2$ lie on the same new cycle. Let $v \in V(\Gamma')$ such that the cycle replacement of $v$ created the new cycle that contains $e_1$ and $e_2$. Let $v'$ be a vertex incident to $e'$ in $\Gamma'$. Then we have that $\Gamma' - \{v, v'\}$ is disconnected, which violates the fact that $\Gamma'$ is $3$-connected. So, we now assume that $e_1$ and $e_2$ lie on distinct new cycles in $\Gamma$. If you cut along a single $e_i$ within a new cycle $C$, then any vertex in that new cycle is still path-connected in $C - e_i$ to any other vertex within that $n$-cycle. However, this would imply that $\Gamma' - e'$ is disconnected (or similarly, $\Gamma' - v$ where $v$ is incident to $e$), which violates the fact that $\Gamma'$ is $3$-connected. 
	
	
We now justify that $Aut_{p}(\Gamma) \cong Aut(\Gamma')$.  Let $E_p$ represent the set of painted edges in $\Gamma$. Then any $f \in Aut_{p}(\Gamma)$ must map $E_p$ to $E_p$. Since $\Gamma - E_p$, is the collection of disjoint new cycles, $f$ must map a new $n$-cycle to some other new $n$-cycle. By replacing each new $n$-cycle with a degree $n$ vertex, we see that $f$ clearly corresponds with an $f' \in Aut(\Gamma')$. At the same time, any $g' \in Aut(\Gamma')$ corresponds with a $g \in Aut_{p}(\Gamma)$ between a cycle expansion on $\Gamma'$ and a cycle expansion on $g'(\Gamma')$.

It remains to show $Aut(\Gamma) = Aut_{p}(\Gamma)$.  More precisely, since $Aut_{p}(\Gamma) \subseteq Aut(\Gamma)$ it's enough to show $Aut(\Gamma) \subseteq Aut_{p}(\Gamma)$. First, suppose $N$ and $O$ are a new and original region of $\Gamma$, respectively, and that $\varphi\in Aut(\Gamma)$ is such that $\varphi(N) = O$.  We will show this leads to a contradiction, which will then quickly get us to the desired result. We claim that $\varphi$ maps every new region of $\Gamma$ to an original one.  To see this let $e_1,\dots, e_j$ be the original edges of $\Gamma$ with one endpoint on $\partial N$, and let $N_1,\dots,N_j$ denote the new regions with $e_i$ connecting $N_i$ and $N$.  Since $\varphi(N) = O$ is an original region, each $\varphi(e_i)$ is a new edge which must connect $O$ to an original region $O_i$.  Thus $\varphi(N_i) = O_i$ for all $1\le i \le j$, and $\varphi$ maps all new regions of $\Gamma$ connected with $N$ to original regions.  Consecutively repeating this argument with the $N_i$ and their neighboring new regions eventually terminates with showing $\varphi$ maps every new region to an original one. Since $\Gamma$ admits a unique planar embedding, we know that $v-e+f =2$, where $v = |V(\Gamma)|$, $e = |E(\Gamma)|$, and $f$ is the number of regions in this planar embedding. Since every edge is incident to two distinct vertices and $\Gamma$ is cubic, we have that $3v = 2e$. At the same time, every edge bounds exactly two distinct faces for the given embedding. Furthermore, every face must be bound by some $m$-cycle of $\Gamma$ where $m \geq 6$. This fact is true for any original region, and by the previous paragraph, it must also be true for any new region. Thus, $2e \geq 6f$, i.e., $\frac{e}{3} \geq f$. Substituting into the Euler characteristic equation gives $\frac{2e}{3} - e + \frac{e}{3} \geq 2$, which implies $0 \geq 2$, which is a contradiction.  Thus, any  $\varphi \in Aut(\Gamma)$ must map new regions to new regions, and so, must also map original regions to original regions. As a result, $\varphi$ must map painted edges to painted edges, showing $Aut(\Gamma) \subseteq Aut_{p}(\Gamma)$, as needed. 
\end{proof}

\begin{thm}
	\label{thm:bprimeSymClass}
For every $G \leq O(3)$ with $|G|<\infty$, there exists an infinite collection of b-prime flat FALs $\{L_i \}$ such that $Sym^{+}(\Sth, L_i) \cong G$ for all $i$ and $L_i \not\simeq L_j$ for $i \neq j$. 
\end{thm}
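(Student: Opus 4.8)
The plan is to reduce the statement to Lemma~\ref{lem:CycleExpansion} together with the classification of finite subgroups of $O(3)$ reviewed in Section~\ref{subsec:finitesubO3}. Lemma~\ref{lem:CycleExpansion} already tells us that if $\Gamma'$ is a simple, planar, $3$-connected graph, then its cycle expansion $\Gamma$ (with original edges painted) is a painted crushtacean for a $b$-prime flat FAL $L$ with $Aut_{p}(\Gamma)\cong Aut(\Gamma')$; moreover, by Corollary~\ref{cor:bprimeunique} and the hypotheses it satisfies, together with Corollary~\ref{cor:main1}, we would get $Sym^{+}(\Sth,L)\cong Aut_{p}(\Gamma)\cong Aut(\Gamma')$ provided $\Sth\setminus L$ has a unique reflection surface. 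So the first task is to check that these cycle-expansion crushtaceans do give links with a unique reflection surface --- I would invoke the authors' classification in \cite[Theorem 1.2]{MiTr2023} of flat FALs with multiple reflection surfaces and verify that none of the specific families listed there arises as a cycle expansion (one can see this structurally: the links with multiple reflection surfaces come from crushtaceans of a very rigid prism/antiprism/Borromean type, whereas a cycle expansion always has original regions of length $\ge 6$, which rules those out). Granting this, the theorem reduces to: \emph{for every finite $G\le O(3)$ there is a simple, planar, $3$-connected graph $\Gamma'$ with $Aut(\Gamma')\cong G$.}

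The second task is to produce such a $\Gamma'$ for each $G$ on the two lists in Section~\ref{subsec:finitesubO3}. For most entries the realizing object is already named: the $n$-gonal pyramid (i.e.\ the wheel $W_{n}$) realizes $\mathbb{Z}_n$ for $n\ge 4$ and $D_n$ for its full symmetry group; the $n$-gonal prism and antiprism realize the dihedral-type groups; and the Platonic solids realize $A_4$, $S_4$, $A_5$ and (with orientation-reversing symmetries) $A_4\times\mathbb{Z}_2$, $S_4\times\mathbb{Z}_2$, $A_5\times\mathbb{Z}_2$. For the groups whose "polyhedron" column is blank --- $\mathbb{Z}_{2n}$ ($n$ odd), $\mathbb{Z}_n\times\mathbb{Z}_2$ ($n$ even), $\mathbb{Z}_2$, $\mathbb{Z}_3$, $D_4$, $\mathbb{Z}_2\times\mathbb{Z}_2$, the pyritohedron group, the rhombic-disphenoid group, and the chiral/achiral tetrahedral groups --- I would either cite the named irregular polyhedra given in the text (pyritohedron, rhombic disphenoid) or, for the cyclic and small cases, decorate a wheel or prism: e.g.\ attach a small rigid "flag" gadget (a path or a triangle pendant) to one rim vertex of $W_n$ to kill all symmetries except a rotation of the desired order, or attach gadgets at a symmetric orbit of vertices to cut a dihedral group down to a cyclic or Klein-four subgroup, while keeping the graph simple, planar and $3$-connected. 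Care is needed that the gadget preserves $3$-connectivity; gluing a triangle along an edge, or coning a vertex, does this. Since Section~\ref{subsec:buildingcrush} promises "a combinatorial construction," I expect the paper to spell out one uniform gadget; in the proof I would present it case by case against the explicit list.

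The third and final task is to upgrade "there exists one $L$" to "there exists an infinite family $\{L_i\}$ of pairwise inequivalent $b$-prime flat FALs each with symmetry group $G$." The idea is to produce infinitely many non-isomorphic choices of $\Gamma'$ with $Aut(\Gamma')\cong G$ --- for instance, by replacing each vertex (or each edge) of a fixed realizer with a $G$-equivariant gadget of growing size, so that the number of crossing circles (equivalently, $|V(\Gamma)|/2$) strictly increases; distinct crushtaceans with different numbers of vertices give flat FALs of different volume, hence $L_i\not\simeq L_j$ for $i\ne j$ by Mostow--Prasad rigidity, or more elementarily by counting components or by the crushtacean being a complete invariant in the $b$-prime, unique-reflection-surface case (Corollary~\ref{cor:bprimeunique}). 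One must re-verify at each stage that enlarging the gadget does not create new automorphisms or new reflection surfaces, which again follows from keeping original regions of length $\ge 6$ and keeping the gadget rigid.

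\textbf{Main obstacle.} The routine-looking but genuinely fiddly step is the exhaustive case analysis in the second task: covering \emph{every} finite subgroup of $O(3)$ --- especially the ones with no standard polyhedral model and the orientation-reversing families --- with an explicit simple, planar, $3$-connected graph whose \emph{full} automorphism group (not just its orientation-preserving part) is exactly $G$, while simultaneously ensuring $b$-primeness (automatic from Lemma~\ref{lem:CycleExpansion}) and uniqueness of the reflection surface. Getting the gadgets to kill exactly the unwanted symmetries and no more, without breaking $3$-connectivity or planarity, is where the real work lies.
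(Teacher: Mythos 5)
Your overall architecture matches the paper's: use Lemma \ref{lem:CycleExpansion} to turn a simple, planar, $3$-connected $\Gamma'$ into a painted crushtacean of a $b$-prime flat FAL with $Aut_p(\Gamma)\cong Aut(\Gamma')$, check via the Section \ref{sec:MultRef} classification that cycle expansions never yield the multiple-reflection-surface families, apply Corollary \ref{cor:main1}, and produce infinitely many examples with a growing number of crossing circles (the paper does this by simply iterating cycle expansion, which re-applies Lemma \ref{lem:CycleExpansion} verbatim and needs no new verification, whereas your growing-gadget variant would require re-checking automorphism groups at each stage). The distinctness argument (more crossing circles, plus uniqueness of the crossing-circle set in the $b$-prime, unique-reflection-surface case) is the same as the paper's.

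The genuine gap is your second task. The paper does not do a case-by-case realization of the finite subgroups of $O(3)$: it cites Babai (Main Corollary 8.12D of \cite{Babai1973}) for the existence of a simple, planar, $3$-connected graph $\Gamma'$ with $Aut(\Gamma')\cong G$ for every finite $G\le O(3)$, and the rest of the proof is independent of how $\Gamma'$ arises. Your proposed case analysis is left as a sketch, and as sketched it contains errors: graph automorphism groups see only combinatorics, not the metric irregularity of a polyhedron, so the $1$-skeleton of the pyritohedron is the dodecahedral graph with $Aut\cong A_5\times\mathbb{Z}_2$ (not $A_4\times\mathbb{Z}_2$), and the $1$-skeleton of the rhombic disphenoid is $K_4$ with $Aut\cong S_4$ (not $\mathbb{Z}_2\times\mathbb{Z}_2$); similarly the $4$-gonal prism graph (the cube) and the $3$-antiprism graph (the octahedron) have larger automorphism groups than the corresponding prism/antiprism isometry groups. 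So exactly the cases you flag as ``fiddly'' --- the cyclic groups, $\mathbb{Z}_2\times\mathbb{Z}_2$, the rotoreflection families, and the chiral polyhedral groups --- all require genuinely new gadget constructions whose correctness (killing exactly the unwanted automorphisms while preserving simplicity, planarity, and $3$-connectivity) you neither specify nor verify. Either carry out that realization in full or, as the paper does, invoke Babai's theorem; as written, the central existence step of your argument is unproven.
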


\begin{proof}
Let $G \leq O(3)$ with $|G|<\infty$ be given.	By Main Corollary 8.12D from \cite{Babai1973}, there exists a $3$-connected planar simple graph $\Gamma'$ such that $Aut(\Gamma') \cong G$. Perform a cycle expansion on $\Gamma'$ to obtain $\Gamma_1$ and paint all the original edges of $\Gamma_1$. Then by Lemma \ref{lem:CycleExpansion}, $\Gamma_1$ is the painted crushtacean for a $b$-prime flat FAL $L$ with $Aut(\Gamma_1)  \cong Aut(\Gamma')  \cong G$. Now, since $\Gamma_{1}$ is the crushtacean for a flat FAL, we also know $\Gamma_1$ is simple, planar, and $3$-connected. Thus, we can  perform a cycle expansion on $\Gamma_1$ and apply Lemma \ref{lem:CycleExpansion} to obtain a painted crushtacean $\Gamma_{2}$ where $Aut(\Gamma_2)  \cong Aut(\Gamma_1)  \cong G$. We can repeat this process to obtain a sequence of painted crushtacean graphs $\{\Gamma_{i}\}_{i=1}^{\infty}$ such that $Aut(\Gamma_i) \cong Aut(\Gamma_j) \cong G$, for all $i,j$. 

Let $L_i$ denote the flat FAL corresponding to painted crushtacean $\Gamma_i$. We claim that each $\Sth \setminus L_i$ admits a unique reflection surface. Section \ref{sec:MultRef} classifies  painted crushtaceans of flat FALs whose complements admit multiple reflection surfaces. For a $\Gamma_i$ obtained by cycle expansion and painted as described above, non-painted edges only occur as edges in sets of disjoint $n$-cycles. However, none of the painted crushtaceans described in Section \ref{sec:MultRef} have this property. Thus, each  $\Sth \setminus L_i$ admits a unique reflection surface, and so, Corollary \ref{cor:main1} implies that $Sym^{+}(\Sth, L_i) \cong G$, for each $i$. Furthermore, since $\Gamma_{i+1}$ has strictly more painted edges than $\Gamma_{i}$, we see that $L_{i+1}$ has strictly more crossing circles than $L_i$. Since a $b$-prime flat FAL with a unique reflection surface has a unique set of crossing circles, we can conclude $L_i \not\simeq L_j$ for $i \neq j$. 
\end{proof}

With a little more work one can ensure that the infinite family $\{L_i \}$ from Theorem \ref{thm:bprimeSymClass} contains no signature links, and further conclude that $Sym^{+}(\Sth\setminus L_i) \cong Sym^{+}(\Sth, L_i)$ by Corollary \ref{cor:main1}.  With this stronger result in mind, we investigate the relationship between cycle expansion and signature links. Signature links are a special class of flat FALs, and only one property of signature links is needed for our purposes (we refer the interested reader to \cite[Definition 5.1]{MiTr2023} for the complete definition of a signature link). The significant property in what follows is that a signature link contains a knot circle $K_f$ which is linked to every other knot circle by exactly one crossing circle.  In what follows, we determine a combinatorial criterion for when the flat FAL corresponding to a given crushtacean is not a signature link, which allows us to show in Lemma \ref{lem:MultCycEx} that the links $\{L_i\}$, for $i \ge 2$, are not signature links.  Corollary \ref{cor:InfSysComp2} contains the desired strengthening of Theorem \ref{thm:bprimeSymClass}.

\begin{lemma}
\label{lemma:CombSigLink}
Let $\Gamma'$ be a simple, planar, $3$-connected graph in the plane $\mathcal{P}$, let $\Gamma$ be the crushtacean resulting from a cycle expansion on $\Gamma'$ with all original edges painted, and let $L$ be the flat FAL associated with $\Gamma$.  If $L$ is a signature link, then at least one region of $\mathcal{P}\setminus\Gamma'$ is adjacent to all regions of $\mathcal{P}\setminus\Gamma'$.
\end{lemma}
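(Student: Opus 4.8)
The plan is to transport the defining feature of a signature link through two dictionaries in turn — the crushtacean/FAL dictionary and the cycle-expansion dictionary relating $\Gamma$ to $\Gamma'$ — and then read the conclusion off the planar embedding of $\Gamma'$.

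First I would recall the crushtacean dictionary, as set up in the construction behind Theorem \ref{thm:main1}: the crossing circles of $L$ are the painted edges of $\Gamma$, the knot circles are the connected components of $\Gamma$ with its painted edges deleted (each such component is a cycle), and the crossing circle carried by a painted edge links the two knot circles whose component-cycles contain its two endpoints. For the cycle-expansion crushtacean $\Gamma$ built from $\Gamma'$ with the original edges painted, these non-painted cycles are precisely the new $n$-cycles, which correspond bijectively to the vertices of $\Gamma'$, while the painted edges correspond to the edges of $\Gamma'$; indeed, contracting each non-painted cycle of $\Gamma$ recovers $\Gamma'$, so $\Gamma'$ \emph{is} the ``linking graph'' of $L$ (vertices $\leftrightarrow$ knot circles, edges $\leftrightarrow$ crossing circles, incidence $\leftrightarrow$ linking). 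Consequently the distinguished knot circle $K_f$ corresponds to a vertex $v_f$ of $\Gamma'$, and the property ``$K_f$ is linked to every other knot circle by exactly one crossing circle'' translates to: $v_f$ is joined to every other vertex of $\Gamma'$ by exactly one edge.

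Next I would pass to the embedding of $\Gamma'$ in $\mathcal{P}$, which is unique up to isotopy since $\Gamma'$ is simple, planar and $3$-connected by Whitney's Theorem \cite{WH1932}. Since $\Gamma'$ is $3$-connected, $\Gamma'-v_f$ is $2$-connected, so in the induced embedding the face $\Phi$ of $\Gamma'-v_f$ containing $v_f$ is bounded by a cycle $Z$; and because $v_f$ is adjacent to every vertex of $\Gamma'-v_f$, planarity forces all of those vertices onto $Z$. Here I would bring in the remaining structure of a signature link from \cite[Definition 5.1]{MiTr2023} to conclude that the only region of $\mathcal{P}\setminus\Gamma'$ not incident to $v_f$ is the region $R^{*}$ on the far side of $Z$ — equivalently, that $\Gamma'$ is essentially a wheel with hub $v_f$ and rim $Z$. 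Granting this, the regions of $\mathcal{P}\setminus\Gamma'$ are exactly $R^{*}$ together with the $\deg(v_f)$ ``pie slices'', each bounded by a consecutive pair of spokes $v_f x$, $v_f y$ and the edge $xy$ of $Z$ between them; each slice shares the edge $xy \subseteq Z$ with $R^{*}$, so $R^{*}$ is adjacent to every region of $\mathcal{P}\setminus\Gamma'$, as claimed.

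The main obstacle is precisely the step from ``$v_f$ is a universal vertex of $\Gamma'$'' to ``$R^{*}$ meets every region'': a universal vertex by itself does not force this — joining $v_f$ to a long cycle that carries a couple of extra chords produces faces touching only a handful of others — so the argument must genuinely invoke the full definition of a signature link, in effect to forbid chords of $Z$ and pin $\Gamma'$ down as a wheel with hub $v_f$. Organizing that reconciliation, rather than the subsequent region count, is where the work lies. (For the intended application one only needs the contrapositive: for the crushtaceans $\Gamma'$ arising in Theorem \ref{thm:bprimeSymClass} — which are cubic, hence have only small faces — no region can be adjacent to all the others, so $L$ is not a signature link.)
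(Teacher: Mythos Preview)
Your crushtacean/FAL dictionary is wrong, and this is a genuine gap that derails the rest of the argument. The claim ``the knot circles are the connected components of $\Gamma$ with its painted edges deleted'' is false in general: for instance, the crushtacean of $P_4$ is the $4$-prism with the four vertical edges painted, so deleting painted edges leaves two squares, yet $P_4$ has four knot circles. For a cycle-expansion crushtacean with all original edges painted, the knot circles correspond not to the new cycles (vertices of $\Gamma'$) but to the \emph{original regions} of $\Gamma$, i.e.\ to the \emph{faces} of $\Gamma'$. This is exactly what the paper establishes: because every unpainted edge in a cycle expansion has both adjacent painted edges on the same (outer) side, each knot circle stays inside a single original region and can be isotoped to (a curve just inside) its boundary. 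Two knot circles are linked by a crossing circle precisely when their regions share a painted edge, i.e.\ when the corresponding faces of $\Gamma'$ are adjacent. You can see the two dictionaries disagree numerically whenever $|V(\Gamma')|\ne |F(\Gamma')|$; take $\Gamma'$ the triangular prism, where there are five knot circles, not six.

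With the correct dictionary the proof is immediate: if $L$ is a signature link, its distinguished knot circle $K_f$ is linked to every other knot circle, so the corresponding face $R_f$ of $\Gamma'$ is adjacent to every other face --- which is exactly the conclusion. Your ``main obstacle'' (passing from a universal vertex $v_f$ to a universally-adjacent region) is entirely an artifact of having landed on the wrong side of the duality; it never arises, and no further structure of signature links is needed beyond the linking property of $K_f$. In effect the ``linking graph'' of $L$ is the planar dual of $\Gamma'$, not $\Gamma'$ itself.
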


\begin{proof}
To prove this combinatorial requirement for an FAL to be a signature link we require a slightly modified version of the construction of $L$ from $\Gamma$ in Theorem \ref{thm:main1}.  In particular, we will show that $L$ can be constructed so that its knot  circles are in one-to-one correspondence with the original regions of $\Gamma$ and that two knot circles are linked by a crossing circle if and only if their corresponding regions are adjacent in the plane.

Assume $\Gamma$ is the projection onto $\mathbb{S}^2$ of the symmetric polyhedron $P$ guaranteed by Mani's Theorem (as in Figure \ref{fig:Thm1.1}$(b)$), and construct $L$ as in the proof of Theorem \ref{thm:main1}.  Arcs parallel to painted edges are glued together by segments of unpainted edges to construct the knot circles of $L$. If $\Gamma$ is a cycle expansion, painted edges adjacent to the same unpainted edge are on the same side of that cycle expansion (see Figure \ref{fig:CombSigLink}$(a)$).  In this case, the knot circle $K$ of $L$ defined in Theorem \ref{thm:main1} stays on the same side of an unpainted edge $e$ and we can isotope it into the interior of the original region adjacent to $e$.  Note that this cannot be done when the adjacent painted edges are on opposite sides of $e$, as in Figure \ref{fig:Thm1.1}$(c)$. The result is that each original region $R$ contains one knot circle $K_R$ which is the boundary of an $\epsilon$-neighborhood of $\partial R$ (see Figure \ref{fig:CombSigLink}$(b)$).  Furthermore, observe that two knot circles are linked when their corresponding regions share a painted edge.  Since the original edges of $\Gamma$ are painted, this occurs precisely when their corresponding regions are adjacent.  Thus the knot circles of the flat FAL $L$ associated with $\Gamma$ have the desired properties.

Now suppose that $\Gamma'$, $\Gamma$ and $L$ satisfy the hypotheses, and that $L$ is a signature link.  Then $L$ has a knot circle $K_f$ which, among other things, is linked to all other knot circles of $L$ by a crossing circle.  By the above arguments, this implies that the original region $R_f$ corresponding to $K_f$ must be adjacent to all other original regions, proving the result.
\end{proof}

\begin{figure}[ht]
\begin{center}
	\includegraphics{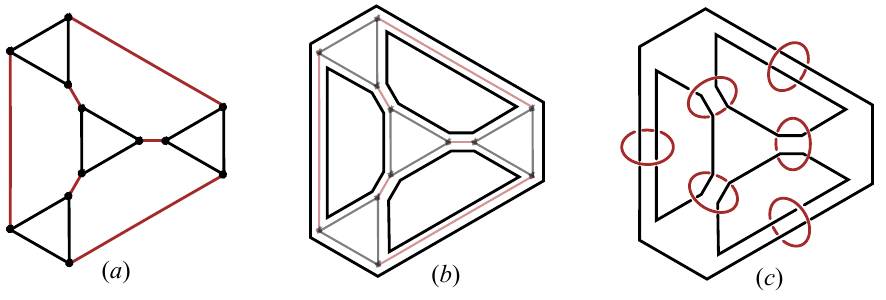}
\end{center}
	\caption{Geometrically building the flat FAL on a cycle expansion crushtacean.}
		\label{fig:CombSigLink}
\end{figure}

We remark that cycle expansion can lead to signature links.  For example, the cycle expansion $\Gamma$ depicted in Figure \ref{fig:CycleExpansion} gives rise to the flat FAL of Figure \ref{fig:FlatFAL}, which is a signature link.  However, we can use the contrapositive of Lemma \ref{lemma:CombSigLink} to show that performing more than one cycle expansion on a graph never leads to a signature link.  As in the proof of Theorem \ref{thm:bprimeSymClass}, we let $\left\{ \Gamma_i\right\} = \left\{\Gamma_1, \Gamma_2, \Gamma_3,\dots\right\}$ be the sequence of crushtaceans in which $\Gamma_1$ is the cycle expansion of $\Gamma'$ and, for $i > 1$, each $\Gamma_{i}$ is the cycle expansion of its predecessor.

\begin{lemma}
\label{lem:MultCycEx}
Let $\Gamma'$ be a simple, planar, 3-connected graph, let $\left\{ \Gamma_i\right\}$ be the sequence of crushtaceans just described, and let $L_i$ be the flat FAL corresponding to $\Gamma_i$.  Then for $i \ge 2$, the link $L_i$ is not a signature link.
\end{lemma}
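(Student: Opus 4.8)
The plan is to apply the contrapositive of Lemma \ref{lemma:CombSigLink}: it suffices to show that for $i \ge 2$, no region of the plane complement of $\Gamma_{i-1}$ is adjacent to all other regions of that complement. Here $\Gamma_{i-1}$ plays the role of the graph ``$\Gamma'$'' from Lemma \ref{lemma:CombSigLink}, and since $i \ge 2$ we know $\Gamma_{i-1}$ is itself a cycle expansion (of $\Gamma_{i-2}$, or of the original $\Gamma'$ when $i = 2$). So the real content is: \emph{if a simple, planar, $3$-connected graph $H$ is a cycle expansion, then no face of $H$ is adjacent to every other face of $H$.} Combined with Lemma \ref{lemma:CombSigLink}, this immediately gives that $L_i$ is not a signature link.

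To prove the face-adjacency claim, I would use the region structure already catalogued in the paragraphs preceding Lemma \ref{lem:CycleExpansion}. Write $H$ as the cycle expansion of some simple, planar, $3$-connected $\Gamma''$, so the faces of $H$ split into new regions (one $n$-cycle face for each vertex of $\Gamma''$ of degree $n$) and original regions (one for each face of $\Gamma''$). Recall the adjacency facts: each new edge separates a new region from an original region, each original edge separates two original regions, and hence a new region is never adjacent to another new region, while an original region is never adjacent to a new region along an original edge but is adjacent to new regions only across new edges. Now take any candidate face $F$ of $H$. If $F$ is a new region, it is adjacent only to original regions, so as soon as $H$ has at least two new regions (equivalently $\Gamma''$ has at least two vertices, which always holds) $F$ fails to be adjacent to all faces. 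If $F$ is an original region, I would argue that $F$ cannot be adjacent to every new region: $F$ is adjacent across its new edges to the new regions coming from the vertices of $\Gamma''$ lying on $\partial F'$ (the corresponding face of $\Gamma''$), and since $\Gamma''$ is $3$-connected with more than one face, there is at least one vertex of $\Gamma''$ not on $\partial F'$, whose new region is then not adjacent to $F$. (Here one uses that a face of a $3$-connected planar graph cannot contain all vertices, since otherwise the graph would be outerplanar, contradicting $3$-connectivity for graphs with more than a handful of vertices — or more simply, $\Gamma''$ has at least $4$ vertices and at least $4$ faces by Steinitz/Euler, and a single face of a simple planar graph with minimum degree $3$ omits some vertex.) Thus no face of $H$ is universally adjacent.

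The main obstacle is making the ``original region $F$ omits some new region'' step airtight: I need to rule out the degenerate possibility that every vertex of $\Gamma''$ lies on the boundary of a single face $F'$. I would handle this by invoking that $\Gamma''$ is $3$-connected and simple with at least $4$ vertices, so by Steinitz's Theorem it is polyhedral and Euler's formula gives at least $4$ faces; if all vertices lay on $\partial F'$ then every edge would lie on $\partial F'$ as well (an edge between two vertices of the boundary cycle either lies on it or is a chord, but a chord would violate the other faces being bounded by $\ge 3$ distinct edges, or one can simply note $\Gamma''$ would then be a cycle, hence not $3$-connected). Once that degeneracy is excluded, the counting argument with the new/original adjacency dictionary from the text finishes the proof, and applying Lemma \ref{lemma:CombSigLink} in contrapositive form to $\Gamma_{i-1}$ completes the argument for all $i \ge 2$.
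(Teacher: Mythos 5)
Your proposal is correct and follows the same overall strategy as the paper: apply the contrapositive of Lemma \ref{lemma:CombSigLink} with $\Gamma_{i-1}$ in the role of the base graph, and show that no face of a cycle expansion is adjacent to every other face, treating new and original regions separately. For original regions your argument coincides with the paper's (find a vertex of the base graph off the corresponding boundary cycle), and in fact you are more careful than the paper about the degenerate possibility that all vertices lie on one boundary cycle: your outerplanarity/minimum-degree argument is the right way to rule this out, and is cleaner than your parenthetical chord remark, which I would drop. The genuine difference is the new-region case: the paper shows that a new region $N_v$ fails to be adjacent even to all \emph{original} regions, which requires the claim that no vertex of the base graph is incident to every face, proved there by building the base graph from a wheel via Tutte moves \cite{Tu1961}; you instead observe that the boundary of a new region consists entirely of new edges, each of which separates a new region from an original one, so $N_v$ is adjacent to no other new region, and since the base graph has at least four vertices there is always another new region. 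Your route is more elementary, avoiding Tutte's wheel theorem entirely at the cost of proving a slightly weaker (but fully sufficient) non-adjacency statement, while the paper's argument yields the stronger fact that a vertex of a simple planar $3$-connected graph never meets every face.
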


\begin{proof}
To proceed, we let $\Gamma$ be a crushtacean resulting from cycle expansion on the simple, planar, 3-connected graph $\Gamma_0$ and show that no region of $\mathcal{P}\setminus\Gamma$ can be adjacent to all other regions.  Repeatedly applying  Lemma \ref{lemma:CombSigLink} to iterated cycle expansions of $\Gamma$ will complete the proof.

Let $R$ be an original region of $\Gamma$ with corresponding region $R_0$ of $\Gamma_0$.  Then the new regions that $R$ is adjacent to are exactly those resulting from cycle replacements on vertices of the boundary cycle of $R_0$.  Since $\Gamma_0$ cannot be a cycle, it must have at least one vertex $v$ not in $\partial R_0$, and $R$ will not be adjacent to the new region $N_v$ resulting from cycle replacement on $v$.  Thus no original region of $\mathcal{P}\setminus \Gamma$ is adjacent with all others.

Now let $N_v$ be a new region of $\mathcal{P}\setminus\Gamma$ obtained by cycle replacement on $v\in V(\Gamma_0)$.  Then $N_v$ is adjacent to the original regions of $\Gamma$ that correspond to the regions of $\Gamma_0$ adjacent to $v$.  We claim that not all regions of $\Gamma_0$ are adjacent to $v$. Assuming our claim, we see that the new region $N_v$ is not adjacent to all regions of $\Gamma$.

We now justify our claim. Note that, for any  wheel $W_n$ with $n \geq 3$, we see that $\Gamma_0 = W_n$ satisfies our claim. Furthermore, the work of Tutte \cite{Tu1961} shows that any simple $3$-connected graph can be built from some wheel $W_n$ by applying a finite series of edge additions and vertex splits (which we collectively refer to as Tutte moves) that preserve being $3$-connected. It is easy to see that performing a series of Tutte moves on $W_n$ (in a way that preserves planarity) will still result in a graph with the desired property in our claim. Since any $3$-connected, simple, planar graph can be built in this manner, $\Gamma_0$ must be have the desired property.

Thus no $\Gamma_i$ has a region adjacent to all others, so Lemma \ref{lemma:CombSigLink} implies that $L_{i+1}$ is not a signature link for all $i\ge 1$. 
\end{proof}


\begin{cor}
\label{cor:InfSysComp2}
For every $G \leq O(3)$ with $|G|<\infty$, there exists an infinite collection of b-prime flat FALs $\{L_i \}$ such that $Sym^{+}(\Sth \setminus L_i) \cong Sym^{+}(\Sth, L_i) \cong G$ for all $i$ and $L_i \not\simeq L_j$ for $i \neq j$.
\end{cor}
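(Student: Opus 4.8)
The plan is to deduce Corollary \ref{cor:InfSysComp2} almost immediately from the machinery already assembled, essentially by bolting Lemma \ref{lem:MultCycEx} onto the proof of Theorem \ref{thm:bprimeSymClass}. First I would fix a finite $G \leq O(3)$ and invoke the same construction as in Theorem \ref{thm:bprimeSymClass}: use Babai's theorem to produce a simple, planar, $3$-connected graph $\Gamma'$ with $Aut(\Gamma') \cong G$, then form the sequence of iterated cycle expansions $\{\Gamma_i\}_{i=1}^\infty$, painting the original edges of each $\Gamma_i$, and let $L_i$ be the associated flat FAL. By Lemma \ref{lem:CycleExpansion} each $\Gamma_i$ is a painted crushtacean for a $b$-prime flat FAL, and $Aut_p(\Gamma_i) = Aut(\Gamma_i) \cong G$ for all $i$. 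The argument in Theorem \ref{thm:bprimeSymClass} already shows each $\Sth \setminus L_i$ has a unique reflection surface (the non-painted edges sit in disjoint $n$-cycles, which rules out every multiple-reflection crushtacean of Section \ref{sec:MultRef}) and that $L_i \not\simeq L_j$ for $i \neq j$ (successive crushtaceans have strictly more painted edges, hence the links have strictly more crossing circles, and $b$-prime flat FALs with a unique reflection surface have a canonical set of crossing circles). So far this is verbatim the earlier proof.

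The one new ingredient is ensuring the family avoids signature links so that the second clause of Corollary \ref{cor:main1} applies. Here I would simply discard $L_1$ and work with the tail $\{L_i\}_{i \geq 2}$: each such $L_i$ is the cycle expansion of $\Gamma_{i-1}$, which is itself a cycle expansion, so Lemma \ref{lem:MultCycEx} tells us $L_i$ is not a signature link for $i \geq 2$. Re-indexing, $\{L_i\}_{i \geq 2}$ is still an infinite collection of pairwise-inequivalent $b$-prime flat FALs. Applying Corollary \ref{cor:main1} in full — $b$-prime, unique reflection surface, not a signature link — yields $Sym^+(\Sth \setminus L_i) \cong Sym^+(\Sth, L_i) \cong Aut_p(\Gamma_i) \cong G$ for every $i \geq 2$, which is exactly the claimed statement.

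I do not anticipate a genuine obstacle here, since every nontrivial input — the graph realization (Babai), the crushtacean/$b$-prime properties and automorphism computation (Lemma \ref{lem:CycleExpansion}), the unique-reflection-surface check (Section \ref{sec:MultRef} and Theorem \ref{thm:bprimeSymClass}), the signature-link exclusion (Lemma \ref{lem:MultCycEx}), and the identification $Sym^+(\Sth \setminus L) \cong Sym^+(\Sth, L)$ (Corollary \ref{cor:main1})\ — is already in hand. The only thing requiring a moment's care is the bookkeeping: one must start the family at $i = 2$ rather than $i = 1$, since a single cycle expansion can produce a signature link (as the example comparing Figures \ref{fig:CycleExpansion} and \ref{fig:FlatFAL} shows), and it is the second-or-later cycle expansions that Lemma \ref{lem:MultCycEx} certifies as non-signature. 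If one wanted to be slightly more economical one could note that the whole point of iterating the expansion twice is precisely to land in the regime where Lemma \ref{lem:MultCycEx} applies, so the corollary is really just Theorem \ref{thm:bprimeSymClass} plus one extra cycle expansion and one citation.
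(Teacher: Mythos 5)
Your proposal is correct and follows essentially the same route as the paper: build $\Gamma'$ via Babai, iterate cycle expansions as in Theorem \ref{thm:bprimeSymClass}, exclude signature links via Lemma \ref{lem:MultCycEx}, and conclude with Corollary \ref{cor:main1}. The only (immaterial) difference is that you always discard $L_1$, whereas the paper discards it only when it happens to be a signature link.
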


\begin{proof}
One mimics the proof of Theorem \ref{thm:bprimeSymClass} to find $\Gamma'$ with $Aut(\Gamma') \cong G$. Then again following the proof of Theorem \ref{thm:bprimeSymClass}, construct the sequence of crushtaceans $\left\{ \Gamma_i \right\}$ with corresponding $b$-prime  flat FALs $\{L_i\}$, all of which admit a unique reflection surface.  If $L_1$ is a signature link, omit it; otherwise, use the complete sequence.  In either case, Lemma \ref{lem:MultCycEx} shows that none of the $L_i$ are signature links, and so, Corollary \ref{cor:main1} provides the additional isomorphism $Sym^{+}(\Sth, L_i) \cong Sym^{+}(\Sth, L_i)$.
\end{proof}

Our work gives an explicit construction for building $b$-prime flat FALs meeting the qualifications of Corollary \ref{cor:InfSysComp2}. One only needs a simple, planar $3$-connected graph $ \Gamma'$ with the appropriate automorphism group. From there, one can iteratively perform  cycle expansions, build the resulting crushtacean, and then build the resulting flat FALs. Finding a $\Gamma'$ that meets these qualifications without referring to Main Corollary 8.12D from \cite{Babai1973}  is frequently straightforward, and we provide a few examples to highlight this. 

\begin{ex}
If $\Gamma'$ is the $1$-skeleton of a regular tetrahedron, cube, regular dodecahedron, regular $n$-prism, or regular $n$-antiprism, then $\Gamma'$ is a polyhedral graph (simple, planar, $3$-connected), and the automorphisms of $\Gamma'$ are the same as the symmetries of its corresponding solid. Thus, such $\Gamma'$ allow one to construct infinite sets of distinct b-prime flat FALs whose orientation-preserving symmetry groups (both of the link and its corresponding complement) are isomorphic to either $S_4$, $S_4 \times \mathbb{Z}_{2}$, $A_5 \times \mathbb{Z}_{2}$, $D_n \times \mathbb{Z}_{2}$, or $D_{2n}$, respectively.
\end{ex}

\begin{ex}
For each $n \geq 4$, the wheel graph $W_n$ is simple, planar, $3$-connected, and $Aut(W_n) = D_n$. For a fixed $n$,  we could set $\Gamma' = W_n$ to build an infinite class of distinct $b$-prime flat FALs $\{L_i\}_{=2}^{\infty}$ such that $Sym^{+}(\Sth \setminus L_i) \cong Sym^{+}(\Sth, L_i) \cong D_n$, for all $i$. In this case,  $L_1$ is a signature link, and we have  $D_n \cong Sym^{+}(\Sth, L_1) \subsetneq Sym^{+}(\Sth \setminus L_1)$. Figure \ref{fig:CycleExpansion} provides a visual of $W_5$ and its first cycle expansion. A diagram for the resulting $b$-prime flat FAL $L_1$ is given in Figure \ref{fig:FlatFAL}. We also note that $W_n$ for $n \geq 4$ provides a planar representation for the regular $n$-gonal pyramid whose symmetry group is $D_n$.  
\end{ex}

\begin{ex}
One can see that the graph $\Gamma_{T}$ depicted in Figure \ref{fig:TetSym} is planar, simple, and $3$-connected with $Aut(\Gamma_{T}) = A_4$. Recall that $A_4$ is isomorphic to the group of orientation-preserving symmetries of the regular tetrahedron. Thus, one can set $ \Gamma' = \Gamma_{T}$ and apply cycle expansions as discussed in Corollary \ref{cor:InfSysComp2}. Furthermore, one could apply a similar graph-theoretic constructions to produce  planar, simple, $3$-connected graphs that  only have the orientation-preserving symmetries of the cube or the dodecahedron. Figure \ref{fig:TetSym} is a modification of the example from https://math.stackexchange.com/questions/2461895/a-graph-whose-automorphism-group-is-the-alternating-group. 
\end{ex}

\begin{figure}[ht]
	\centering
	\begin{overpic}[scale=0.70]{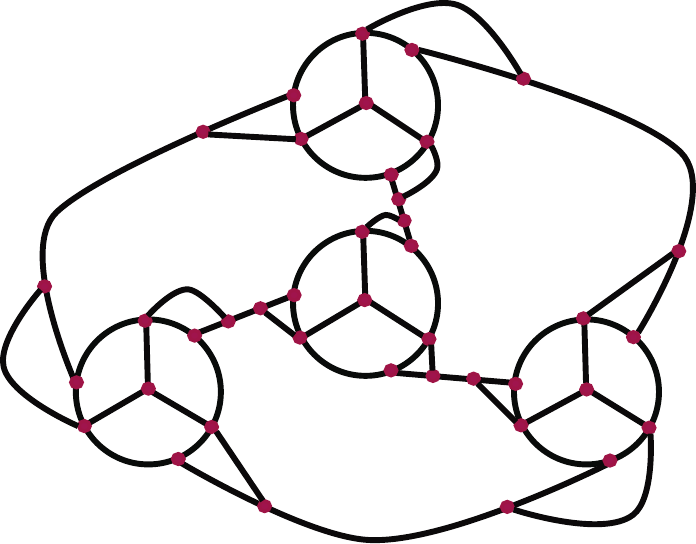}
		\end{overpic}
	\label{fig:TetSym}
	\caption{A $3$-connected, simple, planar graph $\Gamma_{T}$ where $Aut(\Gamma_{T}) = A_4$.}
\end{figure}


\section{Symmetry groups of flat FALs with multiple reflection surfaces}
\label{sec:MultRef}

A complete classification of flat FALs with multiple reflection surfaces  was given by the authors in previous work, which we state below. The links $P_n$ and $O_n$ are described in more detail in upcoming subsections, and Figure \ref{fig:BP4O4} provides diagrams of $P_4$ and $O_4$. 

\begin{thm}\cite[Theorem 1.2]{MiTr2023}
	\label{thm:MT2}
	Suppose $M = \mathbb{S}^{3} \setminus L$ is a flat FAL complement with multiple distinct reflection surfaces. Then either 
	\begin{itemize}
		\item $L$ is equivalent to the Borromean rings and $M$ contains exactly three distinct reflection surfaces, or
		\item $L$ is equivalent to $P_n$ with $n \geq 3$, or $O_n$ with $n \geq 2$, and $M$ contains exactly two distinct reflection surfaces. 
	\end{itemize}
\end{thm}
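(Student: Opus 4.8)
The plan is to assume the complement $M = \Sth \setminus L$ carries two distinct reflection surfaces $R_1 \neq R_2$, each arising as the fixed locus of an orientation-reversing isometric involution $\sigma_i$ of $M$ that preserves $L$, and to push the interaction of the two surfaces until $L$ is forced into a short list. By Mostow--Prasad rigidity everything takes place in the hyperbolic structure, so each $\sigma_i$ determines a flat FAL diagram $D_i(L)$ with projection plane $R_i$, and hence a partition of the components of $L$ into crossing and knot circles, a system of totally geodesic crossing disks, and a (pre-)crushtacean carried by $R_i$. Since the $R_i$ are totally geodesic, $R_1 \cap R_2$ is a disjoint union of geodesics and $\sigma_1 \circ \sigma_2$ is a finite-order element of $Sym^{+}(M)$ whose fixed-point set contains $R_1 \cap R_2$; a short preliminary argument rules out the $R_i$ being disjoint, using that each $R_i$ separates $M$ into two isometric ideal polyhedra (the two halves of Purcell's polyhedral decomposition) together with $\sigma_i$-equivariance.

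The first substantive step is to describe how $R_2$ meets the polyhedral decomposition cut out by $R_1$. Inside each ideal polyhedron obtained by cutting $M$ along $R_1$, the surface $R_2$ restricts to a properly embedded totally geodesic piece, so it is supported on finitely many geodesic planes and meets the crossing disks and faces of $D_1(L)$ in geodesic arcs. The rigidity of totally geodesic thrice-punctured spheres is decisive here: the crossing disks of both diagrams are thrice-punctured spheres, and the analysis of their possible positions from the belted-sum work in \cite{MRSTZ2025} forces $R_2$ either to respect the crossing-disk system of $D_1(L)$, so that the two diagrams induce the same partition of the components of $L$, or to interchange the roles of a crossing circle and a knot circle in a tightly controlled fashion. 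Translating each of these geometric coincidences into the combinatorics of $\Gamma = \Gamma(D_1(L))$, the outcome should be that the presence of $R_2$ forces $\Gamma$ to carry an additional, rigidly prescribed $\ZZ_2$-action that is compatible with the painting, beyond the one already coming from $R_1$.

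The heart of the argument --- and the step I expect to be the main obstacle --- is the ensuing combinatorial classification: a $3$-connected planar cubic graph with a perfect-matching painting that admits such a configuration must, up to the moves relating crushtaceans of a fixed link complement, be one of the crushtaceans of the Borromean rings, of $P_n$ with $n \ge 3$, or of $O_n$ with $n \ge 2$. Ruling out every other crushtacean is where the real work lies; I would combine the extra symmetry just extracted with the matching condition and a face-counting argument in the spirit of the proof of Lemma \ref{lem:CycleExpansion} (new versus original regions together with Euler's formula) to show that any unpainted edge not sitting inside a sufficiently symmetric cycle pattern obstructs the second reflection surface. One must also verify the converse --- that these specific crushtaceans genuinely support a second reflection surface --- so that the reduction is an equivalence.

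Finally, once $L$ is pinned down to one of the three families, the exact counts follow by direct inspection. The Borromean rings complement is two regular ideal octahedra, and its isometry group realizes precisely three orientation-reversing involutions with totally geodesic fixed surface --- the three ``coordinate'' reflection planes of the octahedral picture --- and one checks no fourth such surface exists. For $P_n$ with $n \ge 3$ and $O_n$ with $n \ge 2$, two reflection surfaces are visible in the explicit diagrams, and any further orientation-reversing involution fixing a totally geodesic surface would have to coincide with one of these or produce a third incompatible polyhedral decomposition, which the combinatorics of the corresponding crushtacean forbids. This establishes the stated dichotomy, including the exact counts of reflection surfaces.
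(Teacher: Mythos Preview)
This theorem is not proved in the present paper. It is quoted verbatim from the authors' earlier work and cited as \cite[Theorem 1.2]{MiTr2023}; the current paper simply invokes it as a black box in Section~\ref{sec:MultRef} to enumerate the flat FALs with multiple reflection surfaces. There is therefore no proof here against which to compare your proposal.

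Your outline is a plausible strategy for how such a classification might go---intersecting two totally geodesic reflection surfaces, tracking the resulting involution $\sigma_1\circ\sigma_2$ through the polyhedral decomposition, and reducing to a combinatorial constraint on the crushtacean---but to assess whether it matches or diverges from the actual argument you would need to consult \cite{MiTr2023} directly. As written, the middle step (the ``heart of the argument'') is acknowledged to be a gap: you have not supplied the combinatorial classification, only indicated that one should exist, so this is a proof \emph{plan} rather than a proof.
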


\begin{figure}[ht]
	\centering
	\begin{overpic}[width = \textwidth]{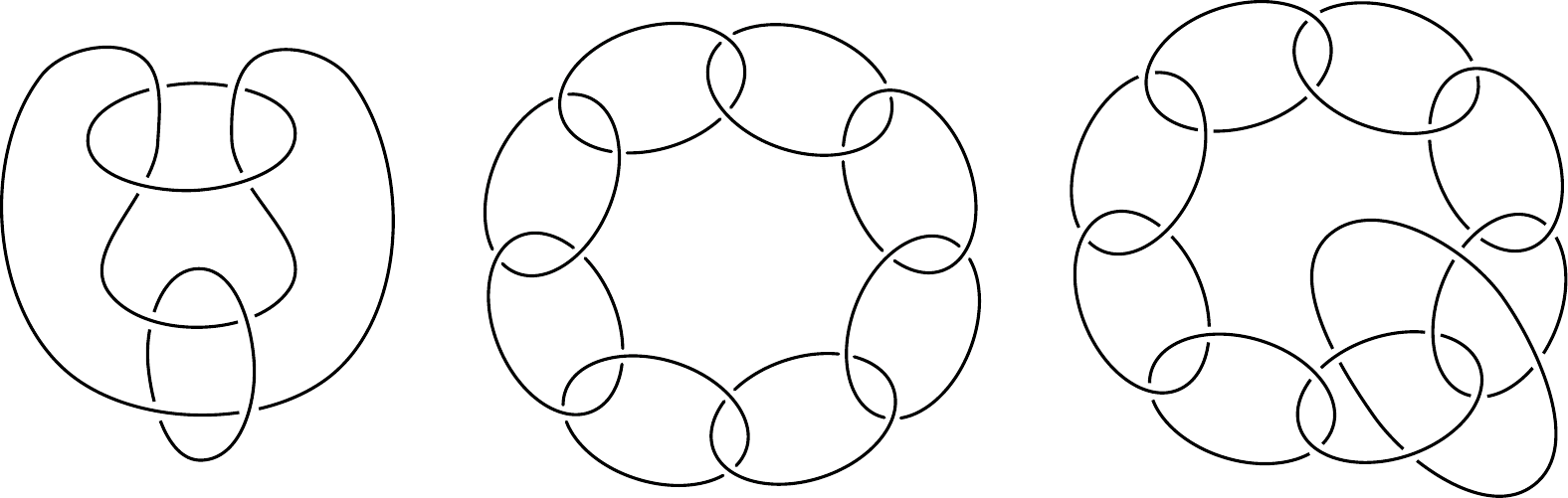}
		\put(0,28){$B$}
		\put(30,28){$P_4$}
		\put(66,28){$O_4$}
		\put(34,20){$c_{1}$}
		\put(50,25.5){$c_{2}$}
		\put(57.5,10.5){$c_{3}$}
		\put(41,4){$c_{4}$}
		\put(88,15){$c_{0}$}
	\end{overpic}
	\caption{On the left, the Borromean Rings, $B$, is depicted as a flat FAL. In the middle, $P_4$ is depicted with its four crossing circles labeled. On the right, $O_4$ is depicted, which can be constructed by adding the crossing circle $C_0$ to $P_4$.}
	\label{fig:BP4O4}
\end{figure}

We would like to understand how $\phi: Aut_{p}(\Gamma(L)) \rightarrow Sym^{+}(\Sth, L)$ behaves in this setting since such links admit type-changing symmetries, and so, the monomorphism $\phi$ won't be onto in these cases. In addition, previous work of the authors \cite[Theorem 3.14]{MiTr2023} shows that $Sym(\Sth, L) \cong Sym(\Sth \setminus L)$ when $L$ is a flat FAL whose complement admits multiple reflection surfaces. Thus, understanding the map $\phi$ in this setting helps us understand how well $Aut_{p}(\Gamma(L))$ represents symmetries of both the corresponding flat FAL and its complement.


\subsection{The Borromean Rings}
\label{subsec:Brings}

The Borromean rings, $B$, are the only flat FAL whose complement admits three distinct reflection surfaces. Any two of the three components of $B$ can play the role of the two crossing circles in a flat FAL diagram for $B$. Thus, there exist three distinct flat FAL diagrams for $B$, which correspond with three painted crushtaceans, all of which are isomorphic to the painted graph in Figure \ref{fig:CrushBor}.  SnapPy shows that $Sym(\Sth, B) \cong Sym(\Sth \setminus B) \cong G \times \mathbb{Z}_{2}$, where $G$ represents the (orientation-preserving) symmetries of the octahedron and $|G| = 24$.  Thus, $|Sym^{+}(\Sth, B)| = 24$, and we see that $Sym^{+}(\Sth, B)$ is isomorphic to the group of orientation-preserving symmetries of the octahedron, or equivalently, the cube. This show $Sym^{+}(\Sth, B)$ is isomorphic to a finite subgroup of $O(3)$.

	\begin{figure}[ht]
	\centering
	\begin{overpic}[scale=0.40]{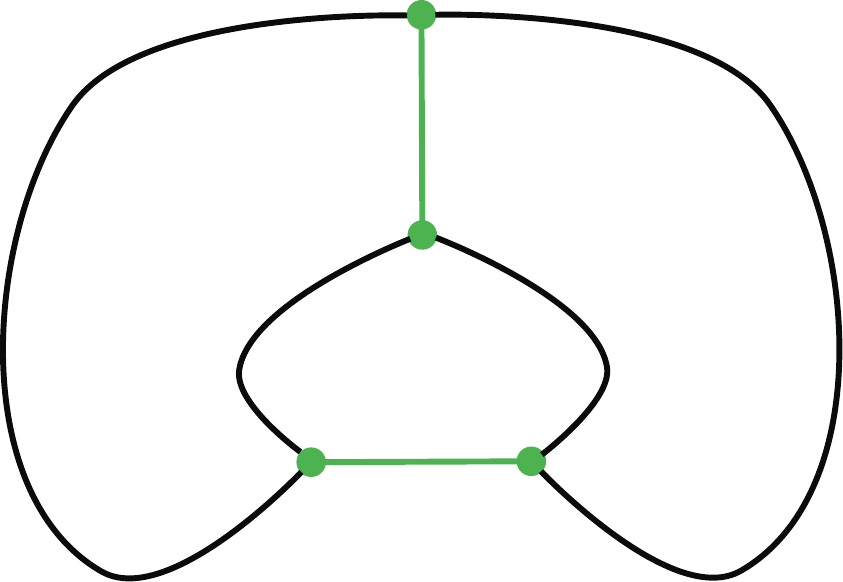}
		\put(52, 52){$p_2$}
		\put(48,8){$p_1$}
	\end{overpic}
	\caption{A crushtacean $\Gamma(B)$ for the Borromean rings, $B$. Painted edges are colored green and labeled.}
	\label{fig:CrushBor}
\end{figure}

 Assume $\Gamma(B)$ is a regular tetrahedron and let $S$ be the square whose vertices are midpoints of the unpainted edges.  Since $Aut_{p}(\Gamma(B))$ preserves unpainted edges, it preserves $S$ and $Aut_{p}(\Gamma(B)) \le D_4$. Moreover, reflection across the plane containing $p_1$ and intersecting $p_2$ at its midpoint acts as a reflection on $S$, while one can choose an element of $Aut_{p}(\Gamma(B))$ that swaps the painted edges and acts as a $\pi/2$ rotation on $S$. Thus $Aut_{p}(\Gamma(B))\cong D_4$. Note that,  $[Sym^{+}(\Sth, B) : \phi(Aut_{p}(\Gamma(B)))] =3$.


\subsection{Flat Fully Augmented Pretzel Links}
\label{subsec:Pn}

For $n \geq 3$, let $P_{n}$ represents a flat FAL obtained from augmenting a pretzel link with $n$ twist regions and undoing all half-twists within each twist region. Thus, $P_{n}$ has $2n$ components where $n$ of these components are crossing circles and $n$ are knot circles for any fixed flat FAL diagram for $P_n$. There are two possible flat FAL diagrams for $P_n$; crossing circles and knot circles switch roles between these diagrams. For $n \geq 3$, these links are known to be hyperbolic. In addition, such links admit symmetric flat FAL diagrams where it is easy to visualize all elements of $Sym^{+}(\Sth, P_n)$; see Figure \ref{fig:SymDiag} where two of the (orientation-preserving) symmetries, $\beta$ and $\gamma$, are explicitly highlighted. One can also see that this link admits an orientation-preserving symmetry $\alpha$  that moves each component in this chain to its clockwise neighbor by rotating the chain and then performing  a $\pi/2$ rotation in the direction of $\beta$. In fact, $\alpha$, $\beta$, and $\gamma$ generate $Sym^{+}(\Sth, P_n)$.  

The theorem below summarizes the classification of the symmetry groups of $P_{n}$ and their complements which mostly comes from the combined work of Meyer--Millichap--Trapp \cite{MeMiTr2020},  Millichap--Trapp \cite{MiTr2025}, and Belleman \cite{Belleman2023}. Briefly, when $n$ is even, we have that $\alpha^{2n} =1$, $<\alpha, \gamma> \cong D_{2n}$, $\beta$ is order two, and  $\beta$ commutes with both $\alpha$ and $\gamma$. When $n$ is odd we have $\alpha^{2n} = \beta$, and so, $Sym^{+}(\Sth, P_n)$ is generated by just $\alpha$ and $\gamma$. Furthermore, $|<\alpha>|=4n$,  $|<\gamma>| =2$, and $(\alpha \cdot \gamma)^{2} = 1$. The only case not covered by previous work is $P_3$. SnapPy shows that $Sym(\Sth, P_3) \cong D_3 \times D_4$ and $Sym(\Sth \setminus P_3) \cong D_4 \times G$, where $G$ represents the group of orientation-preserving symmetries of the octahedron. The groups $Sym^{+}(\Sth, P_3)$ and $Sym^{+}(\Sth \setminus P_3)$ are index two in their respective full symmetry groups. In particular, $|Sym^{+}(\Sth, P_3)| = 24$ and $|Sym^{+}(\Sth \setminus P_3)| = 96$. Since $\{\alpha, \gamma\}$ generate a subgroup of order $24$ for $Sym^{+}(\Sth, P_3)$, we see that $Sym^{+}(\Sth, P_3) = <\alpha, \gamma> \cong D_{12}$.

		\begin{thm}
	\label{thm:Pnsym}
	For $n \geq 4$, we have $Sym^{+}(\Sth \setminus P_n) \cong Sym^{+}(\Sth, P_n)$. Furthermore, for $n \geq 3$, we have $Sym^{+}(\Sth, P_n)$ is order $8n$ and 
	
	\begin{itemize}
		\item if $n$ is even, then $Sym^{+}(\Sth, P_n) \cong D_{2n} \times \mathbb{Z}_{2}$ with  generating set $\{\alpha, \beta, \gamma\}$ and
		\item if $n$ is odd, then $Sym^{+}(\Sth, P_n) \cong D_{4n}$ with generating set $\{\alpha, \gamma\}$.
	\end{itemize}
		\end{thm}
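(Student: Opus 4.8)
The plan is to separate the argument into the generic range $n \ge 4$, where the group $Sym^{+}(\Sth, P_n)$ has essentially already been computed in the literature, and the single exceptional value $n = 3$, which we settle by a direct computation. I would begin by recording the relations satisfied by the three orientation-preserving symmetries $\alpha, \beta, \gamma$ read off from the symmetric flat FAL diagram of $P_n$ in Figure \ref{fig:SymDiag}. Tracking each homeomorphism around the cyclic chain of $2n$ components, one verifies: $\gamma^{2} = 1$ and $(\alpha\gamma)^{2} = 1$ in all cases; for $n$ even, additionally $\alpha^{2n} = 1$, $\beta^{2} = 1$, and $\beta$ commutes with both $\alpha$ and $\gamma$; for $n$ odd, $\alpha$ has order $4n$ with $\alpha^{2n} = \beta$ (so $\beta \in \langle \alpha \rangle$). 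In the even case these relations display $\langle \alpha, \beta, \gamma \rangle$ as a quotient of $D_{2n} \times \mathbb{Z}_{2}$, and in the odd case as a quotient of $D_{4n} = \langle \alpha, \gamma \rangle$; either way $|\langle \alpha, \beta, \gamma \rangle| \le 8n$, and by construction $\langle \alpha, \beta, \gamma \rangle \le Sym^{+}(\Sth, P_n)$.

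The next step is to show that this subgroup is everything. For $n \ge 4$, the equality $|Sym^{+}(\Sth, P_n)| = 8n$ together with the isomorphism type is contained in the combined work of \cite{MeMiTr2020, MiTr2025, Belleman2023}, which I would cite rather than reprove; the remaining point for those $n$ is the identification $Sym^{+}(\Sth \setminus P_n) \cong Sym^{+}(\Sth, P_n)$, which follows from \cite[Theorem 3.14]{MiTr2023}, since Theorem \ref{thm:MT2} guarantees that $\Sth \setminus P_n$ has exactly two reflection surfaces. If one instead wanted a more self-contained account, one can check that $\alpha, \beta, \gamma$ act faithfully on the peripheral data of $P_n$ (for instance the $2n$ components together with induced orientations of their meridians), which forces $|\langle \alpha, \beta, \gamma \rangle| = 8n$ and hence fixes the isomorphism type; and one can bound $|Sym^{+}(\Sth, P_n)|$ from above by combining $|Aut_{p}(\Gamma(P_n))| = 4n$ (a direct inspection of the crushtacean described in this subsection) with the fact that the two flat FAL diagrams of $P_n$ contribute exactly one type-changing coset, so that $[Sym^{+}(\Sth, P_n) : \phi(Aut_{p}(\Gamma(P_n)))] = 2$.

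For $n = 3$, SnapPy computes $|Sym^{+}(\Sth, P_3)| = 24$; since $\langle \alpha, \gamma \rangle$ is already dihedral of order $24$, we conclude $Sym^{+}(\Sth, P_3) = \langle \alpha, \gamma \rangle \cong D_{12}$, in agreement with the odd-$n$ conclusion. In this case $\Sth \setminus P_3$ is genuinely more symmetric, with $|Sym^{+}(\Sth \setminus P_3)| = 96$, and this is precisely why the isomorphism between the symmetry groups of the link and of its complement is asserted only for $n \ge 4$.

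The step I expect to be the real obstacle is the upper bound $|Sym^{+}(\Sth, P_n)| \le 8n$, equivalently the claim that the type-changing coset is unique. A fully independent proof of this would require showing that every isometry of $\Sth \setminus P_n$ either preserves one of the two reflection surfaces or interchanges them, and in either case descends to an automorphism of the crushtacean $\Gamma(P_n)$ or of its crossing/knot-swapped counterpart; this is where the cited computations of \cite{MeMiTr2020, MiTr2025, Belleman2023}, and the SnapPy computation for $P_3$, do the heavy lifting.
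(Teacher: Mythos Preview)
Your proposal is correct and follows essentially the same approach as the paper: the paper also presents Theorem \ref{thm:Pnsym} as a summary of the cited works \cite{MeMiTr2020, MiTr2025, Belleman2023} for $n\ge 4$, records the same relations among $\alpha,\beta,\gamma$, and handles the exceptional case $n=3$ by the identical SnapPy computation (finding $|Sym^{+}(\Sth,P_3)|=24$ and concluding $Sym^{+}(\Sth,P_3)=\langle\alpha,\gamma\rangle\cong D_{12}$). The only minor deviation is that you invoke \cite[Theorem 3.14]{MiTr2023} for the isomorphism $Sym^{+}(\Sth\setminus P_n)\cong Sym^{+}(\Sth,P_n)$, whereas the paper attributes this directly to the three cited references; your additional ``self-contained'' sketch via $Aut_p(\Gamma(P_n))$ and the type-changing coset is extra commentary not present in the paper's treatment.
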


		\begin{figure}[ht]
			\centering
			\begin{overpic}[scale=0.70]{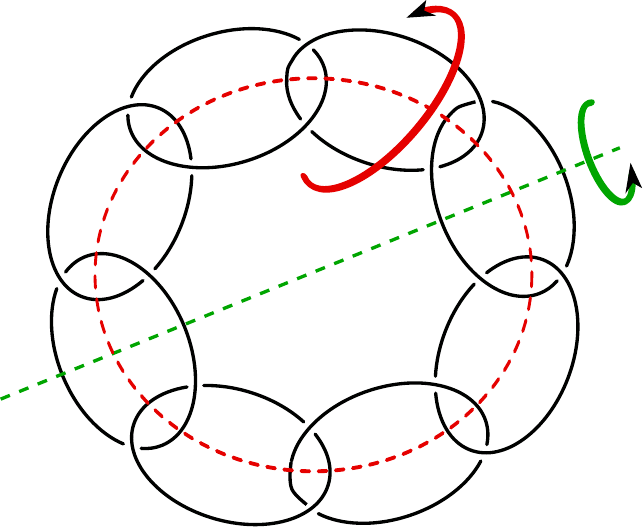}
				\put(73, 77){\LARGE{$\beta$}}
				\put(98,59){\LARGE{$\gamma$}}
			\end{overpic}
			\caption{A symmetric diagram for $P_{4}$}
			\label{fig:SymDiag}
		\end{figure}
		
In order to determine $Aut_{p}(\Gamma(P_n))$, we first note that Mani's polyhedron $P(P_n)$ in this case is a regular $n$-gonal prism.  For $n \ne 4$, the regular $n$-gon faces of $P(P_n)$ are distinct from the square faces and $Aut_{p}(\Gamma(P_n))\cong Aut(\Gamma(P_n))$.  As in Section \ref{subsec:finitesubO3} we see, for $n\ne 4$, that $Aut(\Gamma(P_n))$ is isomorphic to $ D_n\times \mathbb{Z}_2$ for $n$ even and $D_{2n}$ for $n$ odd.  For $n = 4$, we claim that $Aut_{p}(\Gamma(P_4)) \cong  D_4\times \mathbb{Z}_2$ as in the more general even case.  To see this, note that the polyhedron $P(P_4)$ is a cube and we assume painted edges join the top and bottom faces. Then $Aut_{p}(\Gamma(P_4))$ preserves these faces set-wise, and the subgroup $G$ that preserves the top must be a isomorphic to a subgroup of $D_4$.  It is not difficult to construct symmetries of $P(P_4)$ that realize every element of $D_4$, so $G\cong D_4$.  Moreover, the reflection $R_h$ of $P(P_4)$ across a horizontal plane through it's center swaps top and bottom while preserving painted edges, so $h \in Aut_{p}(\Gamma(P_4))$. One sees that $Aut_{p}(\Gamma(P_4))$ is generated by $R_h$ and a set of generators of $G$, and that $R_h$ commutes with elements of $G$, so that $Aut_{p}(\Gamma(P_4)) \cong D_4\times \mathbb{Z}_2$. 


We now explicitly describe the image of $\phi\left(Aut_{p}(\Gamma(P_n))\right)$ in $Sym^{+}(\Sth, P_n)$.  As above, let $R_h$ be reflection that swaps the $n$-gon faces of $P(P_n)$.  In addition, let $S$ be the $2\pi/n$-rotation about the prism's axis and $T$ be reflection across the plane containing the prism's axis and edge $p_1$. We see that $\phi(S) = \beta \cdot \alpha^{2}$, $\phi(R) = \beta$, and $\phi(T) = \gamma$. Note that, for $n \geq 3$, we have $[Sym^{+}(\Sth, P_n) : \phi(Aut_{p}(\Gamma(P_n)))] =2$. Since elements of $Aut_{p}(\Gamma(P_n))$ must map painted edges to painted edges, any corresponding element in $Sym^{+}(\Sth, P_n)$ must map crossing circles to crossing circles. So, $\alpha \in Sym^{+}(\Sth, P_n)$ is not induced by an element of $Aut_{p}(\Gamma(P_n))$.

		\begin{figure}[ht]
			\centering
			\begin{overpic}[scale=.80]{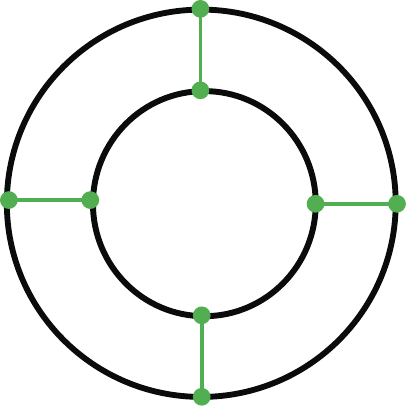}
				\put(52, 86){$p_{1}$}
				\put(85, 44){$p_{2}$}
				\put(42, 11){$p_{3}$}
				\put(10, 52.5){$p_{4}$}
			\end{overpic}
			\caption{A crushtacean $\Gamma(P_4)$ associated to the pretzel FAL $P_{4}$. Painted edges are green and labeled.}
			\label{fig:Crushtacean}
		\end{figure}


\subsection{Octahedral flat FALs with two reflection surfaces}
\label{subsec:On}

For $n \geq 2$, let $O_n$ represent the $2n+1$ component link obtained by augmenting the chain link $P_n$ with a single circle enclosing this chain, as shown in the right side of Figure \ref{fig:BP4O4}. 


\begin{thm}\label{thm:octsym}
	For $n \geq 2$, we have $Sym^{+}(\Sth \setminus O_n) = Sym^{+}(\Sth, O_n)$, with order $8n$. Furthermore, 
	
	\begin{itemize}
		\item if $n$ is even, then $Sym^{+}(\Sth, O_n) \cong D_{2n} \times \mathbb{Z}_{2}$ with  generating set $\{\alpha, \beta, \gamma\}$ and
		\item if $n$ is odd, then $Sym^{+}(\Sth, O_n) \cong D_{4n}$ with generating set $\{\alpha, \gamma\}$.
	\end{itemize}
	\end{thm}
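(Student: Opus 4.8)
The plan is to reduce everything to the already-classified pretzel case. The first assertion is essentially immediate: by Theorem \ref{thm:MT2}, $\Sth\setminus O_n$ admits (exactly two, hence) multiple reflection surfaces, so Theorem 3.14 of \cite{MiTr2023} applies and shows that the always-present inclusion $Sym^{+}(\Sth, O_n)\subseteq Sym^{+}(\Sth\setminus O_n)$ is an equality. It therefore suffices to compute $Sym^{+}(\Sth, O_n)$ and check it has order $8n$ with the stated structure.

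For the lower bound I would draw $O_n$ from the symmetric flat FAL diagram of Figure \ref{fig:SymDiag} with the enclosing circle $C_0$ placed in the obviously symmetric position. The symmetries $\alpha,\beta,\gamma$ visible in the $P_n$ diagram each map $C_0$ to itself, so they descend to elements of $Sym^{+}(\Sth, O_n)$ satisfying exactly the relations recorded for $P_n$: when $n$ is even, $\alpha^{2n}=1$, $\langle\alpha,\gamma\rangle\cong D_{2n}$, $\beta^2=1$ and $\beta$ is central; when $n$ is odd, $|\alpha|=4n$, $(\alpha\gamma)^2=1$ and $\alpha^{2n}=\beta$. This exhibits a subgroup of $Sym^{+}(\Sth, O_n)$ of order $8n$ of the desired isomorphism type (namely $D_{2n}\times\mathbb{Z}_2$ or $D_{4n}$), giving $|Sym^{+}(\Sth, O_n)|\ge 8n$.

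For the matching upper bound, the key step is a lemma: every $f\in Sym^{+}(\Sth, O_n)$ satisfies $f(C_0)=C_0$. I would prove this from the two reflection surfaces $R_1,R_2$ of $\Sth\setminus O_n$ supplied by Theorem \ref{thm:MT2}. Each $R_i$ yields a flat FAL diagram, hence a partition of the components of $O_n$ into crossing circles and knot circles; one checks that $C_0$ is a crossing circle in both partitions while every other component is a crossing circle in exactly one of them. Any $f$ either preserves each $R_i$ or swaps them: in the first case $f$ preserves each partition's set of crossing circles and hence their intersection $\{C_0\}$; in the second case $f$ carries one crossing-circle set to the other, but $f^2$ preserves both $R_i$ and so already fixes $C_0$, and a one-line case check then forces $f(C_0)=C_0$. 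Granting the lemma, deleting $C_0$ sends a symmetry of $(\Sth,O_n)$ to a symmetry of $(\Sth, O_n\setminus C_0)=(\Sth, P_n)$, giving a homomorphism $r\colon Sym^{+}(\Sth, O_n)\to Sym^{+}(\Sth, P_n)$ whose image contains the subgroup generated by the $P_n$-versions of $\alpha,\beta,\gamma$, which is all of $Sym^{+}(\Sth, P_n)$ by Theorem \ref{thm:Pnsym}; thus $r$ is onto and $|Sym^{+}(\Sth, O_n)|=8n\cdot|\ker r|$.

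The main obstacle I anticipate is showing $\ker r$ is trivial: an $f\in\ker r$ is isotopic to the identity through isotopies of the pair $(\Sth, P_n)$, and one must upgrade such an isotopy to one that also fixes $C_0$ throughout — equivalently, the track of $C_0$ under the isotopy, a loop in the space of unknots in $\Sth\setminus P_n$ in the isotopy class of $C_0$, must be nullhomotopic. For $n\ge 3$, where $P_n$ is hyperbolic, I would argue this via Mostow--Prasad rigidity: an element of $\ker r$ is an isometry of $\Sth\setminus O_n$ that fixes the $C_0$-cusp and induces the identity on the (topological) complement $\Sth\setminus P_n$, which pins it down to be trivial. The remaining case $n=2$, where $P_2$ is not hyperbolic so this reduction is unavailable, I would dispatch by a direct SnapPy computation (as was done for $P_3$), confirming $|Sym^{+}(\Sth, O_2)|=16$ and the $D_4\times\mathbb{Z}_2$ structure. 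Combining the upper and lower bounds, together with the parity split of the structure, then yields the theorem.
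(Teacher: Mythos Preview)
Your overall architecture matches the paper's exactly: cite \cite[Theorem 3.14]{MiTr2023} for the first equality, handle $n=2$ by SnapPy, and for $n\ge 3$ show $C_0$ is invariant, restrict to $(\Sth,P_n)$, and prove the restriction map is an isomorphism using Theorem~\ref{thm:Pnsym}. Your $C_0$--invariance argument (intersecting the two crossing-circle sets) is a clean alternative to the paper's, which instead singles out $C_0$ as the unique component contributing two punctures to the $(n{+}2)$--punctured sphere component $S_i\subset R_i$; once you actually write out your ``one-line case check'' (namely $f(C_0)$ lies in both crossing-circle sets, whose intersection is $\{C_0\}$), your version is arguably tidier.

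The real gap is injectivity of $r$. You correctly flag it as the main obstacle, but your proposed resolution---``an element of $\ker r$ is an isometry of $\Sth\setminus O_n$ that \dots\ induces the identity on $\Sth\setminus P_n$, which pins it down to be trivial''---is not a proof. Knowing that $f$ is isotopic to the identity on $(\Sth,P_n)$ tells you only that the induced isometry of $\Sth\setminus P_n$ is the identity; it does not, by itself, force the isometry $\varphi_f$ of the \emph{different} hyperbolic manifold $\Sth\setminus O_n$ to be the identity, precisely because the isotopy need not respect $C_0$. Mostow--Prasad alone does not bridge this; you need an argument internal to $\Sth\setminus O_n$. The paper supplies one: from $\rho(f)=I$ it deduces that $\varphi_f$ is the identity on each reflection surface $R_1,R_2$ of $\Sth\setminus O_n$; since $R_1$ and $R_2$ meet orthogonally, $\varphi_f$ fixes a point of $R_1\cap R_2$ together with a tangent frame there, and then \cite[Proposition~A.2.1]{BP} forces $\varphi_f=\mathrm{id}$. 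That orthogonal--reflection--surfaces/frame--fixing step is the missing ingredient in your sketch.
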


\begin{proof}
 For $n \geq 2$, we have $Sym^{+}(\Sth \setminus O_n) = Sym^{+}(\Sth, O_n)$ by \cite[Theorem 3.14]{MiTr2023}. We treat $n=2$ as a special case, and SnapPy confirms that $Sym(\Sth, O_2) \cong Sym(\Sth \setminus O_2)$ has order $32$, and so, $Sym^{+}(\Sth, O_2)$ is order $16$. From our symmetric diagram for $O_2$, one can easily see that there is subgroup of $Sym^{+}(\Sth, O_2)$ of order $16$ that is isomorphic to $D_4 \times \mathbb{Z}_2$, and so, $Sym^{+}(\Sth \setminus O_2) = Sym^{+}(\Sth, O_2) \cong D_4 \times \mathbb{Z}_2$.  
 
 For $n \geq3$, we first justify that the component $C_0$ is preserved under any $f \in Sym^{+}(\Sth, O_n)$. Let $T_{0}$ represent the torus cusp  in $\Sth \setminus O_n$ corresponding to crossing circle $C_{0}$. Let $f \in Sym^{+}(\Sth, O_n) \leq Sym^{+}(\Sth \setminus O_n)$. We first claim  $f(T_0) = T_0$ and similarly for the corresponding link component $C_0$. By Mostow-Prasad rigidity, there is an isometry $\varphi_{f}: \Sth \setminus O_n \rightarrow \Sth \setminus O_n$ induced by $f$. By the work of Millichap--Trapp, $\Sth \setminus O_n$ admits precisely two distinct reflection surfaces, $R_1$ and $R_2$, and each such reflection surface contains a unique $(n+2)$-punctured sphere component, $S_1$ in $R_1$ and $S_2$ in $R_2$; see the proof of \cite[Theorem 3.14]{MiTr2023}. Note that, $C_0$ is the only component of $O_n$ with two punctures on $S_1$, and similarly for $S_2$. Thus, if $\varphi_{f}(R_1) = R_1$,  then $\varphi_{f}(S_1) = S_1$, and so, $f(T_0) = T_0$. Similarly, if $\varphi_{f}(R_1) = R_2$, then $\varphi_{f}(S_1) = S_2$, and so, $f(T_0) = T_0$. Thus, the cusp $T_0$ is preserved under any symmetry of $\Sth \setminus O_n$, and as a result, the component $C_0$ is preserved under any symmetry of $(\Sth, O_n)$. 
 
We now justify that $Sym^{+}(\Sth, O_n) \cong Sym^{+}(\Sth, P_n)$, which by Theorem \ref{thm:Pnsym} will complete the proof. Let $f \in Sym^{+}(\Sth, O_n)$. Then since $f$ preserves $C_0$ and $P_n = O_n \cup C_0$, we see that $f$ induces an orientation-preserving symmetry of $(\Sth, P_n)$. Thus, we have a well-defined homomorphism $\rho:Sym^{+}(\Sth, O_n) \rightarrow Sym^{+}(\Sth, P_n)$. Suppose $\rho(f) = I$ is the identity symmetry for $(\Sth, P_n)$. Then the corresponding isometry $\varphi_{I}: \Sth \setminus P_n \rightarrow \Sth \setminus P_n$ is the identity on the two reflection surfaces of $\Sth \setminus P_n$, and so, the isometry $\varphi_{f}: \Sth \setminus O_n \rightarrow \Sth \setminus O_n$ is also the identity on its pair or reflection surfaces, $R_1$ and $R_2$. Since $R_1$ and $R_2$ meet orthogonally, we see that $\varphi_{f}$ is an isometry that fixes any point in $R_1 \cap R_2 \subset \Sth \setminus P_n$  along with a tangent frame at that point. Proposition A.2.1 from Benedetti--Petronio \cite{BP} implies that $\varphi_{f}$ is the identity map. Thus, $f$ is the identity symmetry, and so, $\rho$ is injective, which implies that $\rho(Sym^{+}(\Sth, O_n))$ is a subgroup of $Sym^{+}(\Sth, P_n)$. However,  we see that $\alpha, \beta,$ and $\gamma$, the generators of $Sym^{+}(\Sth, P_n)$, all represent self-homeomorphisms of $(\Sth, O_n)$ up to isotopy. This is easy to see in the symmetric diagram of $O_n$ and one only needs to track that the crossing circle $C_0$ can be isotoped to itself in $\Sth \setminus P_n$ after performing these maps.  Thus, $\rho$ is an isomorphism and $G = <\alpha, \beta, \gamma> \cong Sym^{+}(\Sth, O_n)$, where $G$ has the same group structure as given in Theorem \ref{thm:Pnsym}. 
\end{proof}

 We now argue that $Aut_{p}(\Gamma(O_n)) \cong \mathbb{Z}_{2} \times \mathbb{Z}_{2}$.  Note that, adding $C_{0}$ to $P_n$ ``breaks'' much of the symmetry we saw in $\Gamma(P_n)$ by introducing the edge $p_0$ depicted in Figure \ref{fig:CrrushOn}.  In the planar embedding of $\Gamma(O_n)$ there are two distinct triangular regions, $T_1$ and $T_2$, and $p_0$ is the only edge adjacent to both.  Thus $p_0$ is invariant under any element of $f\in Aut_{p}(\Gamma(O_n))$, and $f$ either fixes or swaps $T_1$ and $T_2$. Moreover, elements of $Aut_{p}(\Gamma(O_n))$ either fix or swap the two non-triangular regions $R_1, R_2$ containing $p_0$ in their boundary cycles.  Note that there is a unique element of $Aut_{p}(\Gamma(O_n))$ which realizes each choice of fixing or swapping the pairs $\left\{T_1, T_2\right\}$ and $\left\{R_1, R_2\right\}$. Let $f_t$ swap the triangles while leaving the $R_i$ invariant, and let $f_r$ swap the $R_i$ while leaving the triangles invariant.  A quick check shows that $f_t$ and $f_r$ commute, while generating $Aut_{p}(\Gamma(O_n))$, and we see that $Aut_{p}(\Gamma(O_n)) \cong \mathbb{Z}_{2} \times \mathbb{Z}_{2}$, as desired.  

With this notation, observe that $\phi\left(f_t\right) = \gamma$ and $\phi\left(f_r\right) = \beta$.  In addition, for $n \geq 3$, we have $[Sym^{+}(\Sth, O_n) : \phi(Aut_{p}(\Gamma(O_n)))] =2n$. Thus, it is possible for this index to be arbitrarily large when a flat FAL $L$ admits a complement with multiple reflection surfaces.


	\begin{figure}[ht]
	\centering
	\begin{overpic}[scale=.80]{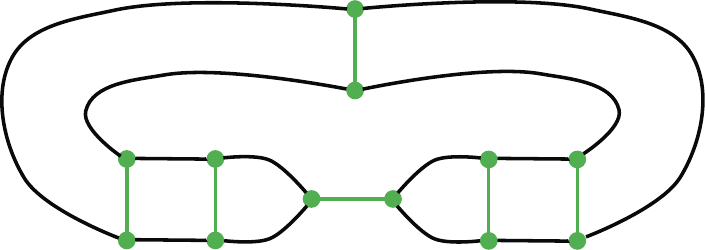}
		\put(48, 3){$p_{0}$}
		\put(71,6){$p_{1}$}
		\put(83, 6){$p_{2}$}
		\put(45,27){$p_{3}$}
		\put(13,7){$p_{4}$}
		\put(25,7){$p_{5}$}
	\end{overpic}
	\caption{The crushtacean $\Gamma(O_5)$. Painted edges are green and labeled.}
	\label{fig:CrrushOn}
\end{figure}


\section{Conclusion}
\label{sec:Conclusion}

We are now in a position to justify Theorem \ref{thm:main2} from Section \ref{sec:intro}. 

\begin{named}{Theorem \ref{thm:main2}}
	Let $G$ be an abstract group. Then $G \cong Sym^{+}(\Sth, L)$ for some b-prime flat FAL $L$ if and only if $G$ is isomorphic to a  finite subgroup of $O(3)$. Furthermore, for every finite $G \leq O(3)$, there exist an infinite class of distinct b-prime flat FALs $\{L_i\}$ such that $Sym^{+}(\Sth \setminus L_i) \cong Sym^{+}(\Sth, L_i) \cong G$, for all $i$.
\end{named}

\begin{proof}
Suppose $L$ is a $b$-prime flat FAL whose complement admits a unique reflection surface. Then Corollary \ref{cor:main1} and Theorem \ref{thm:O(3)isometries} show that $Sym^{+}(\Sth, L)$ must be a finite subgroup of $O(3)$. At the same time, Corollary \ref{cor:InfSysComp2} shows that for any finite subgroup $G$ of $O(3)$, we can build an infinite class of distinct $b$-prime FALs $\{L_i\}$, all with a unique reflection surface, with $Sym^{+}(\Sth \setminus L_i) \cong Sym^{+}(\Sth, L_i) \cong G$, for all $i$. 

Now suppose $L$ is a $b$-prime flat FAL whose complement admits multiple distinct reflection surfaces. It is easy to see that  each $O_n$ with $n \geq 2$ is $b$-composite. For instance, the crossing circle $C_0$ of $O_n$ admits multiple distinct crossing disks, and so, by Theorem 5.2 of \cite{MRSTZ2025}, $O_n$ is $b$-composite. For $n \geq 3$, each $P_n$ is a $b$-prime flat FALs that admits two distinct reflection surfaces. However, Theorem \ref{thm:Pnsym} shows that $Sym^{+}(\Sth, P_n)$ is isomorphic to either $D_{2n} \times \mathbb{Z}_{2}$ when $n$ is even or $D_{4n}$ when n is odd. Such groups are isomorphic to finite subgroups of $O(3)$, as needed. It only remains to consider the Borromean rings, $B$.  In \cite{MRSTZ2025}, the Borromean rings are considered a special case that are neither $b$-prime nor $b$-composite since they admit nontrivial belted-sum decompositions, but the summands are not flat FALs. This covers all the cases of flat FALs that admit multiple distinct reflection surfaces. 
\end{proof}  

Our work tells a complete story for (orientation-preserving) symmetry groups of $b$-prime flat FALs. However, a few questions remain.

\begin{question}
	Can one classify $Sym^{+}(\Sth, L)$ when $L$ is a $b$-composite flat FAL whose complement admits a distinct reflection surface? In this case, is  $Sym^{+}(\Sth, L)$ still isomorphic to a finite subgroup of $O(3)$?
\end{question}

In light of Corollary \ref{cor:main1}, it would be nice to have a simple way to determine if a $b$-prime flat FAL is a signature link.

\begin{question}
	Is there a simple combinatorial characterization for a painted crushtacean $\Gamma$ that determines if the corresponding flat FAL is a signature link?
\end{question}

	\bibliographystyle{hamsplain}
	\bibliography{biblio}
	
\end{document}